\documentclass[hidelinks,onefignum,onetabnum]{siamart250211}

\usepackage{lipsum}
\usepackage{amsfonts}
\usepackage{bbm}
\usepackage{graphicx}
\usepackage{epstopdf}
\usepackage{algorithm,algpseudocode}
\usepackage{booktabs}
\usepackage{multirow}
\usepackage{longtable}
\ifpdf
  \DeclareGraphicsExtensions{.eps,.pdf,.png,.jpg}
\else
  \DeclareGraphicsExtensions{.eps}
\fi

\graphicspath{ {./images/} }

\newsiamremark{remark}{Remark}
\newsiamremark{hypothesis}{Hypothesis}
\crefname{hypothesis}{Hypothesis}{Hypotheses}
\newsiamthm{claim}{Claim}
\newsiamremark{fact}{Fact}
\crefname{fact}{Fact}{Facts}

\headers{Exploiting block triangular submatrices in KKT systems}{Robert Parker, Manuel Garcia, and Russell Bent}

\title{Exploiting block triangular submatrices in KKT systems
  \thanks{Submitted to the editors DATE.
\funding{Funded by the Los Alamos National Laboratory LDRD program. LA-UR-26-20941.}}}

\author{
  Robert Parker\thanks{Los Alamos National Laboratory, Los Alamos, NM} 
  \and Manuel Garcia\footnotemark[2] 
  \and Russell Bent\footnotemark[2]
}

\usepackage{amsopn}

\ifpdf
\hypersetup{
  pdftitle={Exploiting block triangular submatrices},
}
\fi

\begin{document}

\maketitle

\begin{abstract}
We propose a method for solving Karush-Kuhn-Tucker (KKT) systems that exploits
block triangular submatrices by first using a Schur complement decomposition
to isolate the block triangular submatrices then performing a block backsolve
where only diagonal blocks of the block triangular form need to be factorized.
We show that factorizing reducible symmetric-indefinite matrices with standard
1$\times$1 or 2$\times$2 pivots yields fill-in
outside the diagonal blocks of the block triangular form, in contrast to our
proposed method.
While exploiting a block triangular submatrix has limited fill-in, unsymmetric
matrix factorization methods do not reveal inertia, which is required by
interior point methods for nonconvex optimization.
We show that our target matrix has inertia that is known \textit{a priori}, letting
us compute inertia of the KKT matrix by Sylvester's law.
Finally, we demonstrate the computational advantage of this method on KKT systems
from optimization problems with neural network surrogates in their constraints.
Our method achieves up to 15$\times$ speedups over state-of-the-art symmetric
indefinite matrix factorization methods MA57 and MA86 in a constant-hardware
comparison.
\end{abstract}

\begin{keywords}
  Symmetric-indefinite matrices, interior point methods, neural networks,
  block triangular matrices, Schur complement
\end{keywords}

\begin{MSCcodes}
  15B99, 65F05, 65F50
\end{MSCcodes}

\section{Introduction}
Factorizing symmetric, indefinite Karush-Kuhn-Tucker \linebreak(KKT) matrices is a
computational bottleneck in interior point methods for nonconvex, local optimization.
Methods for solving the KKT linear system can be grouped into three
broad categories:
\begin{enumerate}
  \item Direct methods,
  \item Iterative methods, and
  \item Decomposition methods (including Schur complement and null space decompositions).
\end{enumerate}
Interior point methods for nonconvex optimization most commonly use
direct methods for solving their KKT systems.
These methods factorize the matrix 
into the product $LBL^T$, where $L$ is lower triangular and $B$ is block-diagonal
with a combination of $1\times 1$ and $2\times 2$ blocks. These blocks correspond
to $1\times 1$ or $2\times 2$ pivots selected by the method of
Bunch and Kaufman \cite{bunch1977stable}.
The factors are typically computed by multifrontal \cite{duff1983multifrontal}
or supernodal \cite{liu1993supernodes} methods, which have been implemented
in several widely-used codes, e.g.,
MA57 \cite{ma57}, MA86 \cite{ma86}, MA97 \cite{ma97}, UMFPACK \cite{umfpack},
and others.

Iterative methods (e.g., Krylov subspace methods) have also been used
to solve KKT systems \cite{benzi2005saddle,curtis2010inexact}. These methods
do not require storing the matrix or its factors in memory simultaneously
and their repeated matrix-vector multiplications can be parallelized on GPUs.
However, the ill-conditioning encountered in interior point methods leads to slow
convergence \cite{swirydowicz2022linear}, making these methods less popular
in interior point implementations, especially for nonconvex optimization.
Nonetheless, iterative linear solvers for interior point methods are an active
area of research, with work focused on improving preconditioning
\cite{casacio2017preconditioning,shork2020preconditioning}
and analyzing the impact of inexact linear algebra \cite{zanetti2023krylov}.

Decomposition methods exploit the block structure of the KKT matrix,
\[\mathrm{KKT} = \left[\begin{array}{cc}A & B^T \\ B & \\ \end{array}\right],\]
to decompose the linear system solve into solves involving smaller systems.
These smaller systems may then be solved with either direct or iterative methods.
Null space methods are decompositions that exploit a partition of variables
into ``dependent variables'' and ``degrees of freedom'' in order to compute
a basis for the null space of $B$ \cite{benzi2005saddle,coleman1984}.
This basis is then used to reduce the KKT
system to a smaller system in the dimension of the number of degrees of freedom
(the number of variables minus the number of equality constraints).
These methods are typically effective if the number of degrees of freedom
is small, although constructing the null space basis can also be a bottleneck.
To overcome this bottleneck in the context of transient stability constrained
optimal power flow, Geng et al. \cite{geng2012parallel,geng2017gpu} have exploited a block triangular
decomposition for the unsymmetric solve required to construct the null space basis.

Schur complement methods exploit the nonsingularity of one or more symmetric
submatrices of the KKT matrix to decompose the linear system solve into
multiple smaller solves. Benzi et al. \cite{benzi2005saddle} describe a basic
Schur complement method exploiting the nonsingularity of $A$. A similar method
relying on regularization of $A$ has been described by Golub and Grief
\cite{golub2003} and implemented in the HyKKT solver \cite{regev2023hykkt}
using a direct method to construct the Schur complement and an iterative method
to solve the system involving the Schur complement matrix. In contrast to these
general-purpose Schur complement methods, many such methods have exploited
problem-specific structures to factorize KKT matrices in parallel.
For linear programs, Gondzio and Sarkissian \cite{gondzio2003parallel},
Grigoriadis and Khachiyan \cite{grigoriadis1996},
Lustig and Li \cite{lustig1992parallel},
and others have exploited decomposable block structures in interior
point methods.
In the context of nonconvex optimization, Wright \cite{wright1991partitioned}
and Jin et al. \cite{jin2025batched} have exploited nonsingular diagonal
blocks in block tridiagonal matrices, while
Laird and Biegler \cite{laird2008multiscenario} have exploited independent
submatrices due to distinct scenarios in stochastic optimization
and Kang et al. \cite{kang2014} have exploited nonsingular submatrices at
each time point in dynamic optimization.

Our contribution is a Schur complement method that, instead of exploiting
many independent, nonsingular submatrices, exploits a single nonsingular
submatrix that has a favorable structure for factorization and backsolves.
The structure we exploit is a block triangular structure (see \Cref{sec:blocktriangular})
that allows us to limit fill-in to diagonal blocks of this block triangular form,
which is not possible when factorizing these same systems with standard
direct methods.
Specifically, we:
\begin{enumerate}
  \item Propose a novel combination of symmetric Schur complement
    and unsymmetric block triangular decomposition algorithms,
  \item Demonstrate that standard $1\times 1$ and $2\times 2$ pivoting
    leads to fill-in outside the diagonal blocks of block triangular
    submatrices,
  \item Prove that regularization on the relevant block in our KKT
    matrices is both undesirable and unnecessary, and
  \item Demonstrate the computational advantage of our approach on
    KKT matrices derived from optimization problems with neural network
    constraints.
    (See \cite{omlt,schweidtmann2019deterministic,lopez2024} for background
    and applications of these optimization problems.)
\end{enumerate}
Our results show that exploiting one large submatrix with a special structure
can be advantageous in Schur complement methods for symmetric, indefinite
systems. While parallel computing is outside the scope of this work, we note
that exploiting a single large, nonsingular submatrix has potential to exploit
GPU acceleration as data transfer overhead is low compared to the potential
speedup that can be gained by processing this submatrix in parallel.

\section{Background}

\subsection{Interior point methods}

We consider symmetric indefinite linear systems from interior point methods
for solving nonlinear optimization problems in the form given by \Cref{eqn:nlopt}:
\begin{equation}
  \begin{array}{cll}
    \displaystyle\min_x & \varphi(x) \\
    \text{subject to} & f(x) = 0 \\
    & x \geq 0 \\
  \end{array}
  \label{eqn:nlopt}
\end{equation}
Here, $x$ is a vector of decision variables and functions $\varphi$ and $f$
are twice continuously differentiable.
We note that this formulation does not preclude general inequalities;
for our purposes, we assume they have already been reformulated with a slack variable.
Interior point methods converge to local solutions by updating an initial
guess $x^0$ using a search direction computed by solving the linear system
in \Cref{eqn:kkt}, referred to as the KKT system.
\begin{equation}
  \left[\begin{array}{cc}
      \nabla^2\mathcal{L} + \Sigma & \nabla f^T \\
      \nabla f & \\
  \end{array}\right]
  \left(\begin{array}{c}d_x \\ d_\lambda \\ \end{array}\right)
  = -\left(\begin{array}{c}
      \nabla \mathcal{L} \\
      f \\
  \end{array}\right)
  \label{eqn:kkt}
\end{equation}
Here, $\mathcal{L}$ is the Lagrangian function, $\lambda$ is the vector of Lagrange
multipliers of the equality constraints, and $\Sigma$ is a diagonal matrix containing
the ratio of bound multipliers, $\nu$, to variable values.
The matrix in \Cref{eqn:kkt}, referred to as the KKT matrix, is symmetric and indefinite.

\subsection{Inertia}
\label{sec:inertia}
Interior point methods usually require linear algebra subroutines that can not only
solve KKT systems but that can also compute the inertia of the KKT matrix.
\textit{Inertia} is defined as the numbers of positive, negative, and zero
eigenvalues of a symmetric matrix. Inertia is often revealed by symmetric
matrix factorization methods using Sylvester's Law of Inertia \cite{sylvester1852},
which states that two congruent matrices have the same inertia.
That is, if $A$ and $B$ are symmetric and $R$ is nonsingular, then
\begin{equation}
  A = R^T B R ~\Rightarrow~ \mathrm{Inertia}(A) = \mathrm{Inertia}(B)
  \label{eqn:sylvester}
\end{equation}

\subsection{Schur complement}
\label{sec:schur}
The Schur complement is the matrix obtained by pivoting on a nonsingular
submatrix. For a symmetric matrix
\[M = \left[\begin{array}{cc}A & B^T \\ B & C \\\end{array}\right]\]
with $C$ nonsingular, the Schur complement of $M$ with respect to $C$ is
\[S = A - B^T C^{-1} B\]
When solving the linear system $Mx = r$, or
\[\left[\begin{array}{cc}A & B^T \\ B & C \\\end{array}\right]
  \left(\begin{array}{c}x_A \\ x_C \\\end{array}\right)
  = \left(\begin{array}{c}r_A \\ r_C \\\end{array}\right),
\]
the Schur complement can be used to decompose the factorization
and backsolve phases into steps involving only $S$ or $C$, as shown
in \Cref{alg:schur-factorize,alg:schur-backsolve}.
\begin{algorithm}
  \caption{: Matrix factorization using the Schur complement}
  \begin{algorithmic}[1]
    \State {\bf Inputs:} Matrix $M = \left[\begin{array}{cc}A&B^T\\B&C\end{array}\right]$
    \State \texttt{factorize}$(C)$\label{alg:schur-factorize:fact-C}
    \State $S \gets A - B^T C^{-1} B$\label{alg:schur-factorize:build-S}
    \State \texttt{factorize}$(S)$
    \State {\bf Return:} \texttt{factors}$(C)$, \texttt{factors}$(S)$
  \end{algorithmic}
  \label{alg:schur-factorize}
\end{algorithm}
\begin{algorithm}
  \caption{: Backsolve using factors of the Schur complement}
  \begin{algorithmic}[1]
    \State {\bf Inputs:} \texttt{factors}$(C)$, \texttt{factors}$(S)$, right-hand side $r = (r_A, r_C)$
    \State $r_S \gets r_A - B^TC^{-1} r_C$
    \State $x_A \gets S^{-1} r_S$
    \State $\overline{r_C} \gets r_C - B r_A$
    \State $x_C \gets C^{-1} \overline{r_C}$
    \State {\bf Return:} $x = (x_A, x_C)$
  \end{algorithmic}
  \label{alg:schur-backsolve}
\end{algorithm}

The advantage of solving a system via this decomposition is that only matrices
$C$ and $S$ need to be factorized, rather than the full matrix $M$.
This is beneficial if the fill-in in $A$ due to $B^TC^{-1}B$ is relatively limited,
that is, if a majority of $B$'s columns are empty, or if $C$'s structure can be
exploited by a specialized algorithm.

When solving linear systems via a Schur complement, inertia is determined by the
Haynsworth inertia additivity formula \cite{haynsworth1968}:
\begin{equation}
  \mathrm{Inertia}\left(\left[\begin{array}{cc}
        A & B^T \\
        B & C \\
  \end{array}\right]\right)
  = \mathrm{Inertia}\left(C\right) + \mathrm{Inertia}\left(A - B^T C^{-1} B\right).
  \label{eqn:haynsworth}
\end{equation}

\subsection{Block triangular decomposition}
\label{sec:blocktriangular}
An $n\times n$ matrix $M$ is \textit{reducible} into block triangular form
if it may be permuted (by row and column permutations $P$ and $Q$)
to have the form
\begin{equation}
  PMQ = \left[\begin{array}{cc}M_{11} & 0 \\ M_{12} & M_{22} \\\end{array}\right],
  \label{eqn:decomposable}
\end{equation}
where $M_{11}$ is $k\times k$ and $M_{22}$ is $(n-k)\times(n-k)$ for some
$k$ between one and $n-1$.
That is, $M$ is reducible if there exists a permutation that reveals
a $k \times (n-k)$ zero submatrix.
(This definition is due to Frobenius \cite{frobenius1912}.
Some sources, e.g., \cite{pothen1990computing,davis2006}, use the equivalent
\textit{Strong Hall Property}.)
Here, the submatrices $M_{11}$ and $M_{22}$ may themselves be reducible.
The problem of decomposing a matrix into irreducible block triangular
form has been studied by Duff and Reid \cite{duff1978tarjan,duff1977permutations},
Gustavson \cite{gustavson1976}, Howell \cite{howell1976}, and others.
When solving a linear system of equations defined by a reducible matrix,
only the matrices on the diagonal need to be factorized.
For example, when solving the system $MX=B$, where $M$ decomposes into $b$
blocks, i.e.,
\[
\left[\begin{array}{ccc}M_{11} & & \\ \vdots & \ddots & \\ M_{1b}& \cdots & M_{bb} \\\end{array}\right]
\left(\begin{array}{c}X_1 \\ \vdots \\ X_b\\\end{array}\right)
= \left(\begin{array}{c}B_1 \\ \vdots \\ B_b \\ \end{array}\right),
\]
the following equation may be used to sequentially compute the blocks of the solution:
\begin{equation}
  X_i = M_{ii}^{-1} \left(B_i - \sum_{j=1}^{i-1}M_{ji} X_j\right) \text{ for } i \text{ in } \{1,\ldots,b\}.
  \label{eqn:bt-backsolve}
\end{equation}
The primary advantage of this procedure is that fill-in (nonzero entries that are present in
a matrix's factors but not the matrix itself) is limited to the submatrices along the diagonal,
$M_{ii}$, as these are the only matrices that are factorized.
By contrast, a typical $LU$ factorization of the matrix $M$ above may have fill-in
anywhere in or under the diagonal blocks. 
Additionally, L2 (matrix-vector) and L3 (matrix-matrix) BLAS are used directly in
the backsolve procedure, \Cref{eqn:bt-backsolve}. In typical multifrontal and supernodal
algorithms for symmetric-indefinite linear systems, performance benefits are obtained
by aggregating rows and columns into dense submatrices on which
L2 and L3 BLAS can efficiently operate. Exploiting a block triangular form (irreducible
or otherwise) gives us more control over the data structures used to handle the off-diagonal
submatrices. That is, these data structures may be sparse, dense, or some combination thereof
and can be chosen to optimize the relatively simple matrix subtraction and multiplication
operations in \Cref{eqn:bt-backsolve}, rather than chosen for performance in a more
complicated factorization algorithm.

\subsection{Graphs of sparse matrices}
The sparsity pattern of an $m\times n$ matrix may be represented as an undirected bipartite graph.
One set of nodes represents the rows and the other represents the columns. There exists
an edge $(i,j)$ if the matrix entry at row $i$ and column $j$ is structurally nonzero.

For a square, $n\times n$, matrix, a directed graph on a single set of nodes $\{1,\ldots,n\}$
may be defined. The graph has a directed edge $(i,j)$ if a structurally nonzero matrix
entry exists at row $i$ and column $j$.

These graphs are well-studied in the sparse matrix literature (see, e.g., \cite{davis2006}).
In this work, they are necessary only for the statement and proof of \Cref{thm:regularization}.

\subsection{Neural networks}
The methods we use and develop in this work are applicable to general block triangular
submatrices, including those found in multiperiod and discretized dynamic optimization problems.
Here, we demonstrate the benefits of our approach on block triangular submatrices
that arise in neural network-constrained optimization problems.
These optimization problems arise in design and control with neural network surrogates,
neural network verification, and adversarial example problems \cite{lopezflores2024process}.
We address KKT matrices derived from nonlinear optimization problems with neural network
constraints. A neural network, $y=\mathrm{NN}(x)$, is a function defined by repeated
application of an affine transformation and a nonlinear activation
function $\sigma$ over $L$ layers:
\begin{equation}
  \begin{aligned}
    y_l &= \sigma_l (W_l y_{l-1} + b_l) && l \in \{1,\dots,L\},
    \label{eqn:nn}
  \end{aligned}
\end{equation}
where $y_0 = x$ and $y = y_L$.
In this context, training weights $W_l$ and $b_l$ are considered constant.
We consider optimization problems with neural networks included in the constraints
using a full-space formulation \cite{schweidtmann2019deterministic,omlt,dowson2025moai}
in which intermediate variables and constraints are introduced for every layer
of the neural network. The neural network constraints in our optimization problem
are:
\begin{equation}
    \left.
    \begin{array}{l}
        y_l = \sigma_l(z_l) \\
        z_l = W_l y_{l-1} + b_l \\
    \end{array}
    \right\} \text{ for all } l\in \{1,\cdots,L\}
    \label{eqn:nn-full}
\end{equation}
We note that these neural network equations have a triangular
incidence matrix.
However, it is convenient to represent them in block triangular form,
shown in \Cref{fig:nn-incidence}, where blocks correspond to
layers of the neural network.
\begin{figure}[h]
  \centering
  \includegraphics[width=8cm]{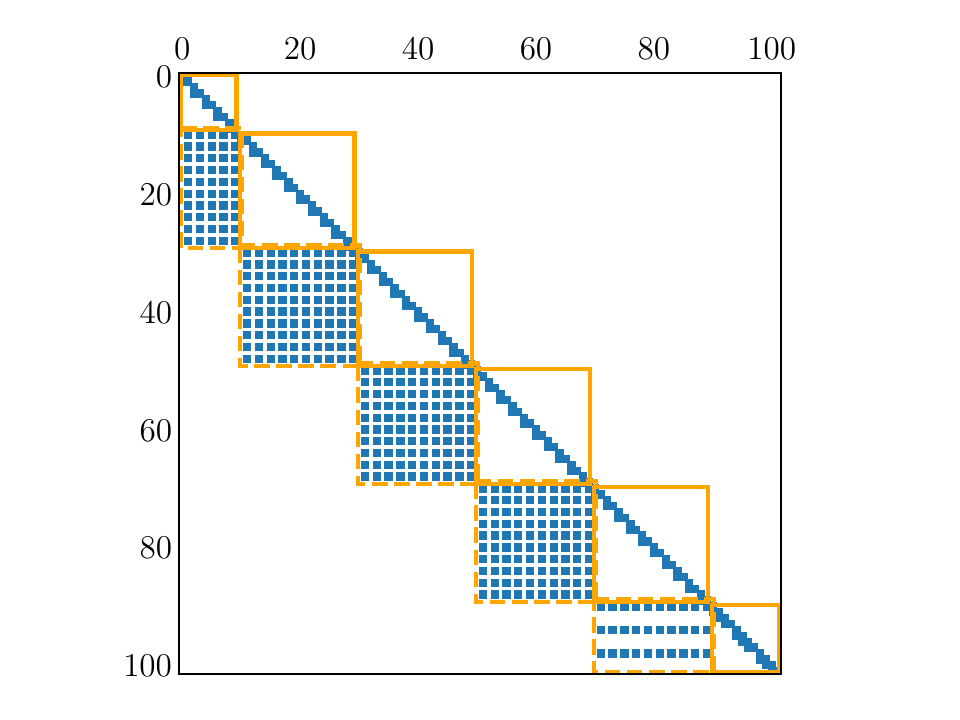}
  \caption{Incidence matrix structure of neural network constraints.
  Rows are equations and columns are output or intermediate variables.
  Each box along the diagonal contains the variables (and constraints)
  for a particular layer of the network, i.e., $y_l$, $z_l$, and the
  associated constraints from \Cref{eqn:nn-full}.
  The dashed boxes under the diagonal contain the nonzeros corresponding
  to the links between these variables and the variables of the next layer.
  That, is they contain the $W_l$ matrices from \Cref{eqn:nn-full}.
}
  \label{fig:nn-incidence}
\end{figure}

\section{Exploiting block triangularity within the Shur complement}

\subsection{Our approach}

We address KKT matrices derived from the following special case of \Cref{eqn:nlopt}:
\begin{equation}
  \begin{array}{cll}
    \displaystyle\min_{x,y} & \varphi(x,y) \\
    \text{subject to} & f(x,y) = 0 \\
    & g(x,y) = 0\\
    & L \leq (x, y) \leq U
  \end{array}
  \label{eqn:nlopt-xy}
\end{equation}
Here, we assume that $\nabla_y g$ is nonsingular and block-lower triangular
(with square, nonsingular blocks along the diagonal).
Let the row and column dimensions of $\nabla_y g$ be $n_y$.
The KKT system of \Cref{eqn:nlopt-xy} is:
\begin{equation}
  \left[\begin{array}{cc|cc}
      W_{xx} & \nabla_x f^T & W_{xy} & \nabla_x g^T \\
      \nabla_x f & & \nabla_y f & \\[2pt]
      \hline
      W_{yx} & \nabla_y f^T & W_{yy} & \nabla_y g^T \\
      \nabla_x g & & \nabla_y g & \\
  \end{array}\right]
  \left(\begin{array}{c}
      d_x \\
      d_{\lambda_f} \\
      \hline
      d_y \\
      d_{\lambda_g} \\
  \end{array}\right)
  = -\left(\begin{array}{c}
      \nabla_x \mathcal{L} \\
      f \\
      \hline
      \nabla_y \mathcal{L} \\
      g \\
    \end{array}\right)
  \label{eqn:kkt-xy}
\end{equation}
Partitioning \Cref{eqn:kkt-xy} as shown above, we rewrite
the system as \Cref{eqn:kkt-partition} for simplicity.
\begin{equation}
  \left[\begin{array}{cc}
      A & B^T \\
      B & C \\
  \end{array}\right]
  \left(\begin{array}{c}
      d_A \\
      d_C \\
  \end{array}\right)
  = \left(\begin{array}{c}
      r_A \\
      r_C \\
    \end{array}\right)
  \label{eqn:kkt-partition}
\end{equation}
We will solve KKT systems of this form by performing a Schur complement reduction
with respect to submatrix $C$, which we refer to as the \textit{pivot matrix}.
Because $\nabla_y g$ is block-lower triangular, $C$
may be permuted to have block-lower triangular form as well.
Specifically, the following permutation vectors permute $C$ to block
lower triangular form:
\begin{equation}
  \begin{array}{rrl}
    \text{Column permutation:} & \left\{1,\ldots,n_y,\right.& \hspace{-0.3cm}\left. 2n_y,\ldots,n_y+1\right\}\\
    \text{Row permutation:} & \left\{n_y+1,\ldots,2n_y,\right. & \hspace{-0.3cm}\left. n_y,\ldots,1\right\}.\\
  \end{array}
  \label{eqn:permutation}
\end{equation}
These permutation vectors are defined such that the row or column index at the $i$-th
position of the permutation vector is the index of the row or column in the original
matrix that is permuted to position $i$ in the new matrix.
This permutation is illustrated with permutation matrices $P$ and $Q$ in \Cref{fig:permutation}.
\begin{figure}[h]
  \centering
  \includegraphics{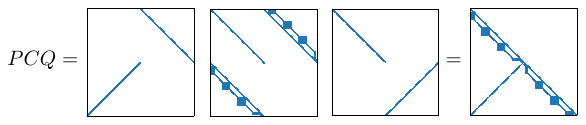}
  \caption{Permutation of $C$ to block triangular form}
  \label{fig:permutation}
\end{figure}

We solve \Cref{eqn:kkt-partition} using a Schur complement with respect
to $C$ via \Cref{alg:schur-factorize,alg:schur-backsolve}.
The computational bottlenecks of this procedure are factorizing the
pivot matrix, $C$, performing many 
backsolves with this matrix to construct the Schur complement matrix, $S$, and
factorizing this Schur complement matrix.
We perform factorizations and backsolves that exploit $C$'s block triangular structure
as described in \Cref{sec:blocktriangular}.

\subsection{Application to neural network submatrices}

The above methods apply to general block triangular Jacobian submatrices
$\nabla_y g$. However, our motivation for developing this method is to exploit
the structure of a neural network's Jacobian. We now discuss the advantages
that this particular structure gives us.

To exploit the structure of neural network constraints, we let
constraint function $g$ (the function whose structure we exploit) be the
functions in \Cref{eqn:nn}. The variables we exploit are the outputs of the
neural network and intermediate variables defined at each layer:
$\{z_1,y_1,\ldots,z_L,y_L\}$.
The Jacobian matrix has the following structure:
\footnote{The structures shown in \Cref{fig:nn-incidence} and
  \Cref{eqn:nn-jac} differ due to different orderings of variables
and equations.}
\begin{equation}
  \left[\begin{array}{cccccc}
    I \\
    \Sigma_1 & I \\
    & W_2 & \ddots \\
    & & \ddots & I \\
    & & & W_L & I \\
    & & & & \Sigma_L & I \\
  \end{array}\right].
  \label{eqn:nn-jac}
\end{equation}
Here, $\Sigma_i$ is the diagonal matrix containing entries $\sigma_i'(z_i)$,
where $\sigma_i'$ is the first derivative of the activation function $\sigma_i$, applied
elementwise. This Jacobian matrix (\ref{eqn:nn-jac}) is lower triangular.
However, when exploiting this structure, it is advantageous to consider blocks each
corresponding to a layer of the neural network, as shown. In this aggregation,
diagonal blocks are themselves diagonal and off-diagonal blocks are either diagonal
matrices containing derivatives of activation functions or the dense weight matrices
that serve as inter-layer connections.
Because the diagonal blocks are identity matrices, they do not need to be factorized
and the block triangular backsolve procedure, \Cref{eqn:bt-backsolve}, will incur
no fill-in. Furthermore, the off-diagonal matrices, $M_{ji}$ in \Cref{eqn:bt-backsolve},
are either diagonal or fully dense.
We store these dense matrices as contiguous arrays (rather than with a COO or CSC
sparse matrix data structure) to take advantage of efficient L3 BLAS operations.

Finally, we observe that for the case of a neural network's Jacobian,
\Cref{eqn:bt-backsolve} is very similar to the forward pass through the network:
$X_j$ is the solution for a previous layer's intermediate variables, $M_{ji}$
is the inter-layer weight matrix, and the backsolve with $M_{ii}$ takes the place
of an activation function.


\subsection{Fill-in}
\label{sec:fillin}
The core benefit of our proposed algorithm is that we do not have to factorize the pivot
matrix, $C$, directly. Instead, we factorize only the diagonal blocks of $C$'s block triangular
form, $C_{ii}$, and therefore only incur fill-in in these blocks.
As we will show, this procedure incurs significantly less fill-in than we expect
when factorizing the symmetric matrix $C$ directly.
Constructing the Schur complement matrix $S$, however, incurs fill-in in the columns
of $B$ that are nonzero.\footnote{See the dense blocks in the Schur complement matrices
in \Cref{fig:mnist-spy,fig:scopf-spy,fig:lsv-spy}. In our neural network application,
these dense blocks correspond to the neural network's inputs and the constraints
containing any output of the network.}
Therefore, our method is particularly effective when the
number of nonzero columns of $B$, i.e., the number of variables and constraints
``linking'' $A$ and $C$, is small.
We will now show that $1\times 1$ and $2 \times 2$ pivots used by traditional
symmetric indefinite linear solvers do not capture the structure of $C$ and lead
to fill-in outside of its diagonal blocks.

For simplicity, we rewrite $C$ in its symmetric form as
\[C = \left[\begin{array}{cc}D & E^T \\ E & \end{array}\right],\]
then partition $D$ and $E$:
\[D = \left[\begin{array}{ccc}
      d_{11} & D^T_{21} & D^T_{31} \\
      D_{21} & D_{22} & D^T_{32} \\
      D_{31} & D_{32} & D_{33} \\
  \end{array}\right]\text{ and }~E = \left[\begin{array}{ccc}
      e_{11} & E_{12} & E_{13}\\
      E_{21} & E_{22} & E_{23}\\
      & & E_{33} \\
  \end{array}\right].
\]
Here, $d_{11}$ and $e_{11}$ are scalars corresponding to a pivot and
$\left[\begin{array}{cc}e_{11} & E_{12} \\ E_{21} & E_{22}\end{array}\right]$
and $E_{33}$ are two square diagonal blocks in a block triangular form of $E$.
Submatrix $E_{22}$ has the rows and columns that are in the same diagonal
block as our pivot while $E_{33}$ has row and column indices that are in
a different diagonal block.
With our proposed algorithm, 
fill-in is limited to these diagonal blocks. That is, our algorithm incurs
no fill-in in $E_{13}$, $E_{23}$, their transposes, or $D$.
In the rest of this section, we will show that $1\times 1$ or $2\times 2$
pivots lead to fill-in in $D$ and in the $E_{23}$ and $E_{23}^T$ blocks.

Applying permutation matrix $P$ to put a potential $2 \times 2$ pivot involving
$e_{11}$ in the upper left of the matrix,
\begin{equation}
  PCP^T = \left[\begin{array}{cccccc}
      d_{11} & e_{11} & D^T_{21} & E_{21}^T & D^T_{31} & \\
      e_{11} & & E_{12} & & E_{13}\\
      D_{21} & E_{12}^T & D_{22} & E_{22}^T & D^T_{32} & \\
      E_{21} & & E_{22} & & E_{23}\\
      D_{31} & E_{13}^T & D_{32} & E_{23}^T & D_{33} & E_{33}^T\\
      & & & & E_{33} \\
  \end{array}\right].
  \label{eqn:c-partition}
\end{equation}
We will show that $1 \times 1$ and $2 \times 2$ pivots may cause fill-in in $D$ and
in off-diagonal blocks of $E$ (i.e., in $E_{23}$), in contrast to our proposed method.
\begin{theorem}
  Let $C$ be the matrix in the right-hand side of \Cref{eqn:c-partition}.
  A $1\times 1$ pivot on $d_{11}$ may result in fill-in everywhere except
  the last block row and column.
  \label{thm:1by1}
\end{theorem}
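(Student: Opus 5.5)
Our approach is to compute the Schur complement produced by eliminating the first row and column of $PCP^T$ in \Cref{eqn:c-partition}. We then read off which entries of the trailing $5\times 5$ block matrix may change from zero to nonzero. Write the first column below the pivot as
\[
v = \left(e_{11},\ D_{21},\ E_{21},\ D_{31},\ 0\right)^T ,
\]
blocked conformally with the remaining block rows. A $1\times 1$ pivot on $d_{11}$ (assumed nonzero) replaces the trailing submatrix $T$ by $T - d_{11}^{-1} v v^T$. The update is a symmetric rank-one outer product, so block $(i,j)$ of the update is $d_{11}^{-1} v_i v_j^T$.

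The key steps are as follows.
\begin{enumerate}
  \item The last component of $v$ is zero, because $E_{33}$'s block row has no entry in the first column. Hence the entire last block row and column of the update vanish, and these blocks are never filled. This is the exclusion in the statement.
  \item For every other block pair $(i,j)$ with $i,j \in \{1,\ldots,4\}$ of the trailing matrix (the $e_{11}$ row, the $D_{2\cdot}$ rows, the $E_{2\cdot}$ rows, and the $D_{3\cdot}$ rows), $v_i$ and $v_j$ are generically nonzero. Thus $v_i v_j^T$ can be dense.
  \item We then list where this lands. It gives fill in the zero diagonal positions, namely the $(2,2)$ scalar position and the $E$--$E$ block $(4,4)$ that was structurally zero. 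It also gives fill in the previously zero positions $(2,4)$, $(4,2)$, in $D_{22}, D_{32}, D_{33}$, and in the off-diagonal $E$ blocks $E_{12}, E_{13}, E_{22}$. Most importantly, it gives fill in the positions of $E_{23}$ and $E_{23}^T$. These are blocks $(4,5)$/$(5,4)$ in the trailing matrix, pairing the $E_{21}$ row with the $D_{31}$ column.
\end{enumerate}

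Since the claim says ``may result'', it suffices to exhibit one instance. We would take all of $e_{11}, D_{21}, E_{21}, D_{31}$ to be dense and nonzero, with generic values so that no cancellation with existing entries occurs. Then every entry of the update outside the last block row and column is nonzero.

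The main subtlety is bookkeeping rather than analysis. First, we must be careful that ``fill-in'' means structural new nonzeros. This is why we argue structurally, assuming no exact numerical cancellation. Second, we must verify that the $E$-type rows indeed have zero entries in the first column. They do not: $E_{21}$ sits in the first column. So the only guaranteed zero of $v$ is the final block, which matches the statement exactly.

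We will also remark that this fill affects $D$ and $E_{23}$, which our block-triangular approach never touches. That observation motivates the comparison in \Cref{sec:fillin}.
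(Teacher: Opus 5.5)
Your proposal is correct and is essentially the paper's proof: both write the remaining matrix as the trailing block minus $d_{11}^{-1}vv^T$ with $v=(e_{11},D_{21},E_{21},D_{31},0)$, and both conclude that only the last block row and column of the update are guaranteed to vanish; your explicit generic instance just makes the ``may'' concrete. One small slip: your block indices are those of the full matrix in \Cref{eqn:c-partition}, not of the trailing matrix as you say for $E_{23}$---in the trailing matrix $E_{23}$ sits at $(3,4)/(4,3)$, while $(4,5)/(5,4)$ is the untouched $E_{33}^T$ block.
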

\begin{proof}[Proof of \Cref{thm:1by1}]
  After pivoting on $d_{11}$, the remainder of the matrix is the Schur complement:
  \[\left[\begin{array}{ccccc}
      & E_{12} & & E_{13}\\
      E_{12}^T & D_{22} & E_{22}^T & D^T_{32} & \\
      & E_{22} & & E_{23}\\
      E_{13}^T & D_{32} & E_{23}^T & D_{33} & E_{33}^T\\
      & & & E_{33} \\
    \end{array}\right]
    - \left[\begin{array}{c} e_{11} \\ D_{21} \\ E_{21} \\ D_{31} \\ 0 \\ \end{array}\right]
    \frac{1}{d_{11}} 
    \left[\begin{array}{ccccc} e_{11} & D_{21}^T & E_{21}^T & D_{31}^T & 0 \end{array}\right]
    .
  \]
  Expanding the second term, this becomes:
  \[
      \left[\begin{array}{ccccc}
      & E_{12} & & E_{13}\\
      E_{12}^T & D_{22} & E_{22}^T & D^T_{32} & \\
      & E_{22} & & E_{23}\\
      E_{13}^T & D_{32} & E_{23}^T & D_{33} & E_{33}^T\\
      & & & E_{33} \\
    \end{array}\right]
    - \left[\begin{array}{ccccc}
        e_{11}^2 & e_{11}D_{21}^T & e_{11}E_{21}^T & e_{11}D_{31}^T & 0\\
        D_{21}e_{11} & D_{21}D_{21}^T & D_{21}E_{21}^T & D_{21}D_{31}^T & 0\\
        E_{21}e_{11} & E_{21}D_{21}^T & E_{21}E_{21}^T & E_{21}D_{31}^T & 0\\
        D_{31}e_{11} & D_{31}D_{21}^T & D_{31}E_{21}^T & D_{31}D_{31}^T & 0\\
        0 & 0 & 0 & 0 & 0 \\
  \end{array}\right]
  \]
  Any nonzero entries in the second term that are not in the first term are fill-in.
  These entries may exist in any of the nonzero blocks of the second term,
  which populate the entire matrix other than the last block row and column.
\end{proof}
\begin{theorem}
  Let $C$ be the matrix in the right-hand side of \Cref{eqn:c-partition}.
  A $2\times 2$ pivot on
  $\left[\begin{array}{cc}d_{11} & e_{11} \\ e_{11} & \\\end{array}\right]$
  results in fill-in in the $D_{22}$, $E_{22}$, $D_{32}$, $E_{23}$, and $D_{33}$
  blocks and their transposes, if applicable.
  \label{thm:2by2}
\end{theorem}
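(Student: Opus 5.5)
Following the proof of \Cref{thm:1by1}, I will write the matrix remaining after the $2\times 2$ pivot explicitly as a Schur complement and then read off which blocks can receive new nonzeros. Denote the pivot block by
\[P_2 = \left[\begin{array}{cc} d_{11} & e_{11} \\ e_{11} & 0 \end{array}\right].\]
Assume $e_{11}\neq 0$, which is needed for the pivot to be nonsingular. Its inverse is then available in closed form:
\[P_2^{-1} = \frac{1}{e_{11}^2}\left[\begin{array}{cc} 0 & e_{11} \\ e_{11} & -d_{11} \end{array}\right].\]
The coupling between the pivot rows and the remaining rows is given by the first two block columns of \Cref{eqn:c-partition} below the pivot. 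Collect these in
\[G = \left[\begin{array}{cc} D_{21} & E_{12}^T \\ E_{21} & 0 \\ D_{31} & E_{13}^T \\ 0 & 0 \end{array}\right].\]
The remaining matrix is then the trailing $4\times 4$ block matrix minus $G P_2^{-1} G^T$.

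The next step is to expand $G P_2^{-1} G^T$ blockwise. Write $u = (D_{21}, E_{21}, D_{31}, 0)$ for the first column of $G$ and $v = (E_{12}^T, 0, E_{13}^T, 0)$ for the second. Then
\[G P_2^{-1} G^T = \frac{1}{e_{11}}\left(u v^T + v u^T\right) - \frac{d_{11}}{e_{11}^2}\, v v^T.\]
I will list the blocks of this update in the order $(D_{22}, E_{22}, D_{32}, E_{23}, D_{33})$ and identify where each block of the update lands:
\begin{itemize}
  \item the $(1,1)$ block contains $D_{21}E_{12}+E_{12}^TD_{21}^T - \tfrac{d_{11}}{e_{11}}E_{12}^TE_{12}$, so it fills $D_{22}$;
  \item the $(2,1)$ block contains $E_{21}E_{12}/e_{11}$, so it fills $E_{22}$;
  \item the $(3,1)$ block contains $D_{31}E_{12}$, $E_{13}^TD_{21}^T$, and $E_{13}^TE_{12}$ terms, so it fills $D_{32}$;
  \item the $(2,3)$ block contains $E_{21}E_{13}/e_{11}$, so it fills $E_{23}$;
  \item the $(3,3)$ block contains $D_{31}E_{13}$ and $E_{13}^TE_{13}$ terms, so it fills $D_{33}$;
  \item the $(2,2)$ block is zero, because the second component of $v$ is zero. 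The $(2,2)$ position therefore stays structurally zero, which is consistent with the statement.
\end{itemize}
All blocks in the last block row and column vanish, because the last components of $u$ and $v$ are zero.

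To conclude, I will argue that these products generically introduce entries that were not already nonzero. One example suffices: $E_{21}E_{13}$ is an outer-product pattern, dense on $\mathrm{rows}(E_{21})\times\mathrm{cols}(E_{13})$, and it can be nonzero where $E_{23}$ is structurally zero. The other blocks follow in the same way. The transposed blocks follow by symmetry.

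The main subtlety is the phrase ``results in fill-in'', since the statement should be read structurally, as in \Cref{thm:1by1}. Numerical cancellation between terms, for example $D_{21}E_{12}+E_{12}^TD_{21}^T-\tfrac{d_{11}}{e_{11}}E_{12}^TE_{12}$, could in principle remove some of it. I therefore plan to state explicitly that the claim is about structural, generic fill-in and that it holds whenever the relevant coupling blocks are nonzero.
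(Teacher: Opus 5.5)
Your proposal is correct and is essentially the paper's proof. You use the same coupling matrix $G$ and the same closed-form inverse of the $2\times 2$ pivot, and you read off $GP_2^{-1}G^T$ blockwise in the same way to find new nonzeros in $D_{22}$, $E_{22}$, $D_{32}$, $E_{23}$, $D_{33}$ while the last block row and column are untouched. Your form $\frac{1}{e_{11}}(uv^T+vu^T)-\frac{d_{11}}{e_{11}^2}vv^T$ is a cleaner way of writing the same expansion, and your caveat that the claim is structural (generic, up to cancellation) states precisely what the paper leaves implicit in ``may exist.''
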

\begin{proof}[Proof of \Cref{thm:2by2}]
  After pivoting on $\left[\begin{array}{cc}d_{11} & e_{11} \\ e_{11}\end{array}\right]$,
  the remainder of $C$ is the Schur complement:
  \[\left[\begin{array}{cccc}
      D_{22} & E_{22}^T & D^T_{32} & \\
      E_{22} & & E_{23}\\
      D_{32} & E_{23}^T & D_{33} & E_{33}^T\\
      & & E_{33} \\
    \end{array}\right]
    - \left[\begin{array}{cc}
        D_{21} & E_{12}^T \\
        E_{21} & 0 \\
        D_{31} & E_{13}^T \\
        0 & 0 \\
    \end{array}\right]
    \left[\begin{array}{cc}
        & \frac{1}{e_{11}} \\
        \frac{1}{e_{11}} & -\frac{d_{11}}{e_{11}^2} \\
    \end{array}\right]
    \left[\begin{array}{cccc}
        D_{21}^T & E_{21}^T & D_{31}^T & 0 \\
        E_{12} & 0 & E_{13} & 0 \\
    \end{array}\right]
  \]
  Expanding the second term, this becomes:
  \[
    \scalebox{0.9}{$
      \frac{1}{e_{11}}
    \left[\begin{array}{cccc}
      \left(E_{12}^TD_{21}^T + D_{21}E_{12} - \frac{d_{11}}{e_{11}} E_{12}^TE_{12}\right)
        & E_{12}^TE_{21}^T
        & \left(E_{12}^TD_{31}^T + D_{21}E_{13} - \frac{d_{11}}{e_{11}}E_{13}^TE_{13}\right)
        & 0 \\
      E_{21}E_{12} & 0 & E_{21}E_{13} & 0 \\
      \left(E_{13}^TD_{21}^T + D_{31}E_{12} - \frac{d_{11}}{e_{11}}E_{13}^T E_{12}\right)
       & E_{13}^T E_{21}^T
       & \left(E_{13}^TD_{31}^T + D_{31}E_{13} - \frac{d_{11}}{e_{11}}E_{13}^TE_{13}\right)
       & 0 \\
      0 & 0 & 0 & 0 \\
    \end{array}\right]
$}
\]
  Any nonzero entries in the second term that are not in the first term are
  fill-in. These entries may exist in any of the nonzero blocks of the second
  term, which are limited to the $D_{22}$, $E_{22}$, $D_{23}$, $E_{23}$, and
  $D_{33}$ blocks of the original matrix (and their transposes).
\end{proof}

Theorems \ref{thm:1by1} and \ref{thm:2by2} apply to pivots induced by permuting
$E$ to block upper-triangular form.
A similar result holds when $E$ is initially in block-\textit{lower} triangular form.
We do not claim that either of these will
result in a minimum-fill ordering or even that our results prove that a
$2\times 2$ pivot is always strictly better than a $1\times 1$ pivot, as this will
depend on sparsity patterns of the submatrices.
However, these block-fill-in analyses give us reason to prefer a $2\times 2$
pivot involving $e_{11}$ to a $1\times 1$ pivot. Our experience,
which we justify with results in \Cref{sec:num-results}, is that
traditional symmetric indefinite linear solvers prioritize $1\times 1$ pivots.
We hypothesize that this is one reason for the large fill-in we will observe.
We again highlight that neither $1\times 1$ nor $2\times 2$ pivots obtain the
favorable result of our specialized method that fill-in is limited to diagonal
blocks of $E$.

\subsection{Inertia and regularization}
Interior point methods require linear algebra subroutines that can not only solve
KKT systems but that can also compute the inertia of the KKT matrix.
As discussed in \Cref{sec:schur}, Schur complement methods typically use the
Haynsworth formula to check inertia given the inertia of the factorized pivot
and Schur complement matrices.
However, our block triangular factorization of $C$ does not reveal its
inertia. Fortunately, our assumption that $\nabla_y g$ is nonsingular implies
that the inertia of $C$ is $(n_y, n_y, 0)$ via \Cref{thm:inertia}
(a special case of Lemma 3.2 in \cite{gould1985}).
\begin{theorem}
  Let $C$ be as defined in \Cref{eqn:kkt-partition}:
  \[C = \left[\begin{array}{cc}
      W_{yy} & \nabla_y g^T \\
      \nabla_y g & \\
  \end{array}\right],
\]
where $\nabla_y g$ is $n_y\times n_y$ and nonsingular.
The inertia of $C$ is $(n_y, n_y, 0)$.
  \label{thm:inertia}
\end{theorem}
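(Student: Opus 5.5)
Write $J = \nabla_y g$ and $H = W_{yy}$, so that $C = \left[\begin{array}{cc} H & J^T \\ J & 0 \end{array}\right]$ with $J$ square and nonsingular and $H$ symmetric but otherwise arbitrary. The plan is to exhibit an explicit congruence that turns $C$ into a matrix whose inertia can be read off, and then invoke Sylvester's law, \Cref{eqn:sylvester}.

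\textbf{Step 1: eliminate $H$.} Take
\[R = \left[\begin{array}{cc} I & 0 \\ -\tfrac{1}{2}J^{-T} H & I \end{array}\right],\]
which is nonsingular because it is block lower triangular with identity diagonal blocks. A routine block multiplication gives
\[R^T C R = \left[\begin{array}{cc} 0 & J^T \\ J & 0 \end{array}\right].\]
The top-left block is $H - \tfrac{1}{2}H - \tfrac{1}{2}H = 0$. The off-diagonal blocks are unchanged, and the bottom-right block stays $0$.

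\textbf{Step 2: reduce to a standard form.} Apply a second congruence with $\mathrm{diag}(J^{-1}, I)$. This gives
\[\left[\begin{array}{cc} J^{-T} & 0 \\ 0 & I \end{array}\right]\left[\begin{array}{cc} 0 & J^T \\ J & 0 \end{array}\right]\left[\begin{array}{cc} J^{-1} & 0 \\ 0 & I \end{array}\right] = \left[\begin{array}{cc} 0 & I \\ I & 0 \end{array}\right].\]

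\textbf{Step 3: read off the inertia.} The matrix $\left[\begin{array}{cc} 0 & I \\ I & 0 \end{array}\right]$ is a direct sum of $n_y$ copies of $\left[\begin{array}{cc} 0 & 1 \\ 1 & 0 \end{array}\right]$, each with eigenvalues $\pm 1$. Alternatively, its eigenvectors are $(u, \pm u)$ with eigenvalues $\pm 1$. Either way its inertia is $(n_y, n_y, 0)$. By \Cref{eqn:sylvester}, $C$ has the same inertia.

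The argument has no real obstacle. The only point needing care is the choice of congruence in Step 1: the factor $\tfrac{1}{2}$ is needed because the correction appears twice, once from $R^T$ and once from $R$. One should also confirm that $J^{-T}$ exists, which holds by the nonsingularity assumption. The case $H = 0$ needs no separate treatment, since $R$ is then the identity.
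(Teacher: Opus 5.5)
Your proof is correct and takes essentially the paper's approach: an explicit congruence that uses the factor $-\tfrac{1}{2}$ to annihilate $W_{yy}$ and normalizes $\nabla_y g$ to the identity, reducing $C$ to $\left[\begin{array}{cc} 0 & I \\ I & 0 \end{array}\right]$, followed by Sylvester's law. The only difference is cosmetic: the paper merges your two steps into one transformation, $RCR^T$ with $R = \left[\begin{array}{cc} I & -\frac{1}{2}W_{yy}\nabla_y g^{-1} \\ 0 & \nabla_y g^{-1} \end{array}\right]$, which scales the multiplier block rather than the $y$ block.
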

\begin{proof}[Proof of \Cref{thm:inertia}]
  Let
  \[R = \left[\begin{array}{cc} I & -\frac{1}{2}W_{yy}\nabla_y g^{-1} \\ & \nabla_y g^{-1} \end{array}\right].\]
  Then
  \[RCR^T = \left[\begin{array}{cc} & I \\ I & \end{array}\right].\]
  The matrix $\left[\begin{array}{cc} & I \\ I & \end{array}\right]$, where $I$ is $n_y \times n_y$,
  has inertia $(n_y, n_y, 0)$. 
  Then, because $R$ is nonsingular, by Sylvester's Law of Inertia, $C$ has inertia
  $(n_y, n_y, 0)$ as well.
\end{proof}

The inertia of $C$ is therefore known \textit{a priori}, so as long as an
inertia-revealing method (i.e., an $LBL^T$ factorization) is used for the
factorization of the Schur complement matrix, we can recover the inertia of the
original matrix as well.

Interior point methods use inertia to check conditions required for global convergence.
In particular, the search direction computed must be a descent direction for the
barrier objective function when projected into the null space of the equality constraint
Jacobian \cite{waechter2005linesearch}. 
This condition is met if the inertia of the KKT matrix is $(n,m,0)$, where $n$ is the number
of variables and $m$ is the number of equality constraints.
If the KKT matrix is factorized and the inertia is incorrect, a multiple of the identity
matrix is added to the $(1,1)$ or $(2,2)$ block of the KKT matrix, depending on whether
more positive or negative eigenvalues are needed.

While inertia correction is necessary for convergence of interior point methods,
adding multiples of the identity matrix to $C$ causes it to lose its property
of reducibility. This is formalized in \Cref{thm:regularization}.

\begin{theorem}
  Let $S$ be the following symmetric, block-$2\times2$ matrix:
  \[S = \left[\begin{array}{cc} I & B^T \\ B & I \\ \end{array}\right].\]
  Let $G_B$ be the bipartite graph of $B$. If $G_B$ is connected, then
  $S$ is irreducible.
  \label{thm:regularization}
\end{theorem}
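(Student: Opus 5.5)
The plan is to argue by contradiction directly from the Frobenius definition in \Cref{sec:blocktriangular}. We reduce the existence of a zero off-diagonal block to a disconnection of $G_B$, using two structural features of $S$: its diagonal is entirely nonzero, and it is symmetric. Let $S$ be $n\times n$ with $n = n_1 + n_2$, where $B$ is $n_2\times n_1$. Throughout, ``nonzero'' means structurally nonzero.

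\textbf{Step 1 (translate reducibility into index sets).} Suppose $S$ were reducible. Then there are permutations $P,Q$ such that $PSQ$ has a $k\times(n-k)$ zero submatrix with $1\le k\le n-1$. Equivalently, there is a set $R$ of row indices with $|R|=k$ and a set $K$ of column indices with $|K|=n-k$ such that $S_{ij}=0$ for all $i\in R$, $j\in K$.

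\textbf{Step 2 (use the nonzero diagonal).} Since $S_{ii}=1$ for every $i$, no index can lie in both $R$ and $K$, so $R\cap K=\emptyset$. Because $|R|+|K|=n$, this forces $K = R^c$, where $R^c$ is the complement of $R$ in $\{1,\ldots,n\}$. Hence $S_{ij}=0$ for all $i\in R$ and $j\in R^c$. By symmetry, $S_{ji}=0$ as well. Thus $R$ is a nonempty proper subset of indices with no nonzero entry of $S$ linking $R$ to $R^c$ in either direction.

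\textbf{Step 3 (identify the graph of $S$ with $G_B$).} Identify the first $n_1$ indices of $S$ with the column nodes of $G_B$ and the last $n_2$ indices with the row nodes of $G_B$. The off-diagonal nonzeros of $S$ then lie only in the $B$ and $B^T$ blocks, since the diagonal blocks are identities. Consequently, the undirected graph on $\{1,\ldots,n\}$ with an edge $\{i,j\}$ whenever $i\neq j$ and $S_{ij}\neq 0$ is exactly $G_B$. Step 2 says this graph has no edge between $R$ and $R^c$. Since both sets are nonempty, $G_B$ is disconnected, which contradicts the hypothesis. Therefore $S$ is irreducible.

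The only step requiring care is Step 2. The Frobenius definition allows independent row and column permutations, so a priori the zero block need not be a principal one of the form $R\times R^c$. The nonzero diagonal together with the count $|R|+|K|=n$ is what collapses the general case to the principal one. Step 3 is then just bookkeeping of the correspondence between indices of $S$ and nodes of $G_B$.

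Two remarks follow. First, if $B$ has an empty row or column, the corresponding node of $G_B$ is isolated, so $G_B$ is not connected and the hypothesis of \Cref{thm:regularization} already excludes this case. Second, the argument uses only that the diagonal blocks have nonzero diagonals. It therefore extends verbatim to $C$ regularized by adding $\delta I$ to both diagonal blocks, which is the situation motivating the theorem.
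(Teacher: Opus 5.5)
Your proof is correct, and it follows a somewhat different route from the paper's. The paper cites Varga's Theorem 1.17 (a matrix is irreducible iff its directed graph is strongly connected). It then notes that, by symmetry, every edge of the directed graph of $S$ coming from the $B$ block has a reverse edge from the $B^T$ block. Strong connectivity therefore reduces to connectivity of an undirected graph, which is $G_B$ under the same node identification you use in Step 3.

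You instead argue directly from the Frobenius definition of Section 2.4 by a cut argument, and your Step 2 does real work that the paper's proof skips. Varga's notion of reducibility uses a single symmetric permutation $PMP^T$. The paper's definition allows independent $P$ and $Q$, and in general these notions differ. For example, $\left[\begin{array}{cc}0&1\\1&0\end{array}\right]$ has a strongly connected digraph, yet a row swap turns it into $I_2$, so it is reducible in the paper's sense. The two notions do coincide for matrices with a zero-free diagonal, and that is exactly what your deduction $R\cap K=\emptyset$, $K=R^c$ exploits.

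So the paper's argument is shorter, but as written it delivers irreducibility under symmetric permutations, and reaching its own definition silently relies on the identity diagonal blocks. Your argument is self-contained, matches the stated definition, and also covers rectangular $B$, whereas the paper takes $B$ square.

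One small caveat concerns your closing remark about $\tilde C$. The block $W_{yy}+\delta_y I$ may have off-diagonal nonzeros, so the graph in Step 3 contains $G_B$ rather than equalling it. This is harmless, since extra edges only help connectivity, but the extension is not quite ``verbatim.''
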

We use the following theorem (Varga \cite{varga2000}, Theorem 1.17):
\begin{theorem}
  Let $M$ be an $n\times n$ matrix and $G$ its directed graph.
  $M$ is irreducible if and only if $G$ is strongly connected.
  \label{thm:varga}
\end{theorem}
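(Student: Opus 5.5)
The plan is to prove both directions by contrapositive, translating the zero block in \Cref{eqn:decomposable} directly into a set of vertices of $G$ with no outgoing edges to its complement. First, a convention must be fixed. The equivalence in \Cref{thm:varga} holds when the row and column permutations are tied together, i.e., $Q = P^T$, so that $PMP^T$ is a simultaneous relabeling of the vertices of $G$; this is the setting of Varga's theorem. With independent $P$ and $Q$ the statement would fail: a cyclic permutation matrix has a strongly connected directed graph, but it can be permuted to the identity, which is reducible. I will therefore read ``reducible'' in \Cref{thm:varga} as the existence of a permutation $P$ and some $1\le k\le n-1$ with
\[PMP^T = \left[\begin{array}{cc}M_{11} & 0 \\ M_{12} & M_{22}\end{array}\right],\]
and I will assume $n\ge 2$, since for $n=1$ both properties hold trivially by convention.

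\emph{Reducible $\Rightarrow$ not strongly connected.} Suppose $PMP^T$ has the form above. Let $R\subset\{1,\ldots,n\}$ be the set of $k$ original indices sent to the first $k$ positions by $P$, and let $R^c$ be its complement. Because $P$ acts on rows and columns simultaneously, the zero $k\times(n-k)$ block says exactly that $m_{ij}=0$ for every $i\in R$ and $j\in R^c$. Hence $G$ has no edge from $R$ to $R^c$, and therefore no directed path from $R$ to $R^c$. Both sets are nonempty because $1\le k\le n-1$, so $G$ is not strongly connected.

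\emph{Not strongly connected $\Rightarrow$ reducible.} Suppose $G$ is not strongly connected. Then there exist vertices $i$ and $j$ with no directed path from $i$ to $j$. Let $R$ be the set of vertices reachable from $i$, with $i\in R$ by convention. Then $R$ is nonempty and $j\notin R$, so $1\le |R|\le n-1$. The set $R$ is closed under out-edges: if $u\in R$ and $(u,v)$ is an edge, then $v$ is reachable from $i$, so $v\in R$. Thus $m_{uv}=0$ for all $u\in R$ and $v\in R^c$. Now choose $P$ so that the vertices of $R$ come first, in any order, followed by those of $R^c$. Then $PMP^T$ has a zero block of size $|R|\times(n-|R|)$ in its upper right corner, which is exactly the form in \Cref{eqn:decomposable} with $k=|R|$. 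Hence $M$ is reducible.

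The only genuinely delicate step is the bookkeeping in the first paragraph. One must state the convention $Q=P^T$ under which the theorem is true, and check that a simultaneous permutation is just a relabeling of vertices, so that ``zero block'' and ``no edges from $R$ to $R^c$'' are the same statement. Once this is fixed, both directions reduce to the reachability-closure argument above.
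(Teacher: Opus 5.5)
\paragraph{Review.} Your proof is correct. The paper gives no proof of this statement; it quotes it from Varga (Theorem~1.17). Your argument is the standard proof of that result. Under a simultaneous permutation $PMP^T$, a zero upper-right block is exactly a nonempty proper vertex set with no edges leaving it. Conversely, the set of vertices reachable from $i$ is closed under out-edges, so it yields such a block. Both directions are correct as you wrote them.

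Your caveat about conventions is also correct, and it points to a genuine inconsistency in the paper. The definition in \Cref{eqn:decomposable} allows independent $P$ and $Q$, and the paper pairs it with the Strong Hall property. That notion is partial decomposability, not Varga's reducibility, and under that literal reading the theorem is false, exactly as your cyclic-permutation example shows.

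The paper's only use of the theorem, in the proof of \Cref{thm:regularization}, is still sound, because the matrix $S$ there has a zero-free diagonal. Suppose $S$ vanishes on rows $R$ and columns $K$ with $|R|+|K|$ equal to the dimension. Then $R\cap K=\emptyset$, since a common index would give a zero diagonal entry. Hence $K=R^c$, and the zero block is already one produced by a simultaneous permutation. So for $S$ the two notions of reducibility coincide.
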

\begin{proof}[Proof of \Cref{thm:regularization}]
  By \Cref{thm:varga}, it is sufficient to show that the directed graph of $S$,
  $G_S$, is strongly connected.
  Let $n$ be the row and column dimension of $B$. The lower-left block,
  $B$, adds edges from $\{1,\ldots,n\}$ to $\{n+1,\ldots,2n\}$ (corresponding
  to nonzero entries of $B$) to $G_S$
  while the upper-right block, $B^T$, adds the reverse edges.
  We compress these pairs of edges to form an undirected graph,
  where connectivity in the undirected graph is equivalent to strong connectivity
  in the directed graph.
  This undirected graph is identical to $B$'s bipartite graph, $G_B$, with the following node mapping:
  Nodes $\{1,\ldots,n\}$ in $G_S$ map to column nodes in $G_B$ while $\{n+1,\ldots,2n\}$
  in $G_S$ map to row nodes in $G_B$.
  Therefore connectivity of $G_B$ implies strong connectivity of $G_S$, which (by \Cref{thm:varga})
  is equivalent to irreducibility of $S$.
\end{proof}

The regularized form of our pivot matrix, denoted $\tilde C$, is:
\[\tilde C = \left[\begin{array}{cc}
      W_{yy} + \delta_y I & \nabla_y g^T \\
      \nabla_y g & \delta_g I \\
  \end{array}\right].
\]
Submatrix $\nabla_y g$ is the Jacobian of neural network constraints
and has a connected bipartite graph. Then by \Cref{thm:regularization}, $\tilde C$ does
not decompose via block triangularization.
Therefore, we do not add regularization terms in the pivot matrix. We
assume $\nabla_y g$ is nonsingular, so regularization is not necessary to correct
zero eigenvalues.

\section{Test problems}

We test our method on KKT matrices from three optimization problems involving neural network
constraints: Adversarial generation for an MNIST image classifier (denoted ``MNIST''), security-constrained
optimal power flow with transient feasibility encoded by a neural network surrogate (denoted ``SCOPF''),
and load shed verification of power line switching decisions made by a neural network (denoted ``LSV'').
Structural properties of the neural networks and resulting optimization problems are shown in
\Cref{tab:nn,tab:structure}.

\begin{table}[h]
  \centering
  \small
  \caption{Properties of neural network models used in this work}
  \resizebox{\textwidth}{!}{
  \begin{tabular}{ccccccc}
\toprule
Model & N. inputs & N. outputs & N. neurons & N. layers & N. param. &      Activation \\
\midrule
\multirow{3}{*}{MNIST} & \multirow{3}{*}{784} & \multirow{3}{*}{10} &         3k &         7 &        1M &    Tanh+SoftMax \\
                       &                      &                     &         5k &         7 &        5M &    Tanh+SoftMax \\
                       &                      &                     &        11k &         7 &       18M &    Tanh+SoftMax \\
\midrule
\multirow{3}{*}{SCOPF} & \multirow{3}{*}{117} & \multirow{3}{*}{37} &         1k &         5 &      578k &            Tanh \\
                       &                      &                     &         5k &         7 &        4M &            Tanh \\
                       &                      &                     &        12k &        10 &       15M &            Tanh \\
\midrule
\multirow{3}{*}{LSV} & \multirow{3}{*}{423} & \multirow{3}{*}{186} &        993 &         5 &      111k &         Sigmoid \\
                     &                      &                      &         2k &         5 &      837k &         Sigmoid \\
                     &                      &                      &         6k &         5 &        9M &         Sigmoid \\
\bottomrule
  \end{tabular}
}
  \label{tab:nn}
\end{table}

\begin{table}[h]
  \centering
  \caption{Structure of optimization problems with different neural networks embedded}
  \begin{tabular}{cccccc}
\toprule
Model & N. param. & N. var. & N. con. & Jac. NNZ & Hess. NNZ \\
\midrule
\multirow{3}{*}{MNIST} &        1M &      7k &      5k &       1M &        2k \\
                       &        5M &     12k &     11k &       5M &        5k \\
                       &       18M &     22k &     21k &      18M &       10k \\
\midrule
\multirow{3}{*}{SCOPF} &      578k &      4k &      4k &     567k &        9k \\
                       &        4M &     11k &     11k &       4M &       12k \\
                       &       15M &     25k &     25k &      15M &       19k \\
\midrule
\multirow{3}{*}{LSV}   &      111k &      8k &      4k &     125k &        4k \\
                       &      837k &     10k &      6k &     854k &        5k \\
                       &        9M &     19k &     15k &       9M &       10k \\
\bottomrule
  \end{tabular}
  \label{tab:structure}
\end{table}

\subsection{Adversarial image generation (MNIST)}

The MNIST set of handwritten digits \cite{lecun1998mnist} is a set of images 
of handwritten digits commonly used in machine learning benchmarks.
We have trained a set of neural networks to classify these images using PyTorch \cite{paszke2019pytorch}
and the Adam optimizer \cite{adam}. The networks each accept a 28$\times$28 grayscale
image and output scores for each digit, 0-9, that may be interpreted as a probability
of the image being that digit. The total numbers of neurons and trained parameters in the three
networks we test for this problem are shown in \Cref{tab:nn}.
After training these neural networks, our goals is to find a minimal perturbation
to a reference image that results in a particular misclassification.
This optimization problem is given by \Cref{eqn:adversarial-image},
which is inspired by the minimal-adversarial-input formulations of
Modas et al. \cite{modas2019sparse} and Croce and Hein \cite{croce2020minimal}.
\begin{equation}
  \begin{array}{cll}
    \displaystyle\min_x & \left\|x - x_{\rm ref}\right\|_1 \\
    \text{subject to} & y = \mathrm{NN}(x) \\
    & y_t \geq 0.6 \\
  \end{array}
  \label{eqn:adversarial-image}
\end{equation}
Here, $x\in \mathbb{R}^{784}$ contains grayscale values of the generated image
and $x_{\rm ref}$ contains those of the reference image. Vector $y\in\mathbb{R}^{10}$
contains the output score of each digit and $t$ is the coordinate corresponding to the
(misclassified) target label of the generated image. We note that $t$ is a constant
parameter chosen at the time we formulate the problem.
We enforce that the new image is classified as the target with a probability of at least
60\%. The sparsity structures of this problem's KKT, pivot, and Schur complement matrices
are shown in \Cref{fig:mnist-spy}.
\begin{figure}[h]
  \centering
  \includegraphics[width=\textwidth]{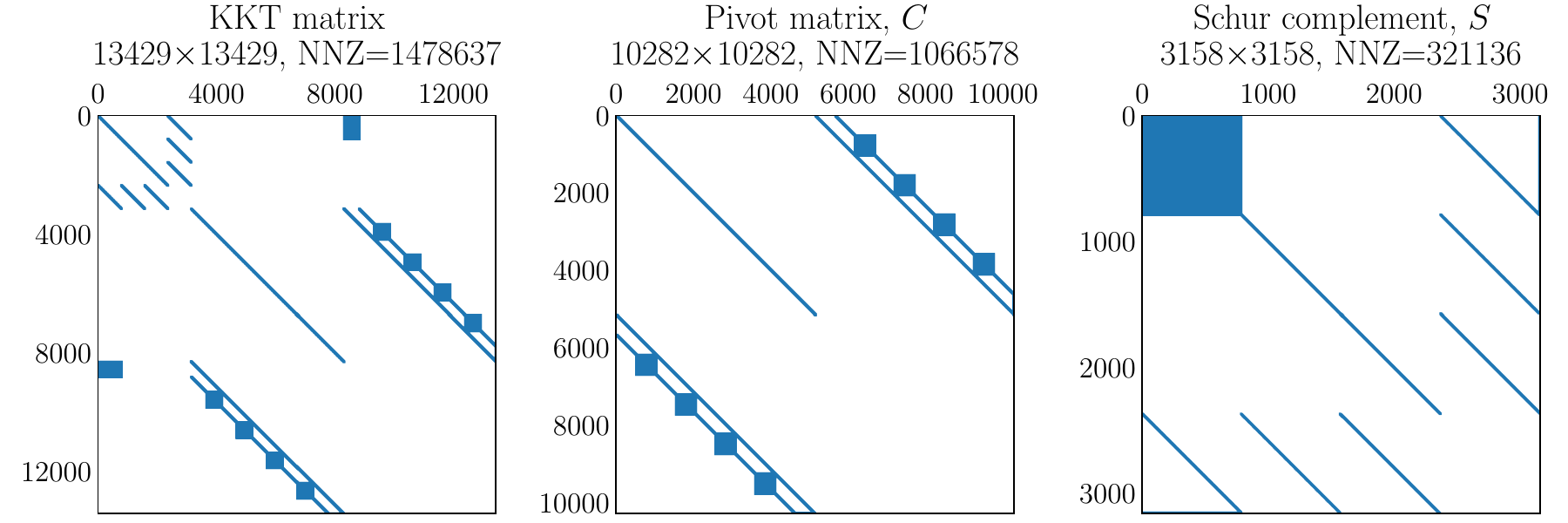}
  \vspace{-0.7cm}
  \caption{Structure of KKT, pivot, and Schur complement matrices for the smallest
  instance of the MNIST test problem. The KKT matrix is in the order shown in \Cref{eqn:kkt-xy}.}
  \label{fig:mnist-spy}
\end{figure}

\subsection{Security-constrained AC optimal power flow (SCOPF)}
Security-constrained alternating current (AC) optimal power flow (SCOPF)
is an optimization problem for dispatching generators in an electric power grid
in which feasibility of the grid (with respect to voltage, line flow, and load satisfaction
constraints) is enforced for a set of \textit{contingencies} $k\in K$ \cite{aravena2023}.
Each contingency represents the loss of a set of generators and/or power lines.
Instead of enforcing feasibility at the grid's new steady state, we enforce feasibility
of the first 30 seconds of the transient response. Specifically, we enforce that
the frequency at each bus remains above $\eta = 59.4~\mathrm{Hz}$.
Instead of inserting differential equations describing transient grid behavior
into the optimization problem (as done by, e.g., Gan et al. \cite{gan2000}),
we use a neural network that predicts the minimum frequency over this horizon.
A simplified formulation is given in \Cref{eqn:scopf}:
\begin{equation}
  \begin{array}{cll}
    \displaystyle\min_{S^g,V} & c(\mathbb{R}(S^g)) \\
    \text{subject to} & F_k(S^g, V, S^d) \leq 0 & k\in \{0,\ldots,K\} \\
    & \left.\begin{array}{l}y_k = \mathrm{NN}_k(x) \\ y_k\geq\eta \\ \end{array} \right\} 
      & k\in \{1,\ldots,K\}\\
  \end{array}
  \label{eqn:scopf}
\end{equation}
Here, $S^g$ is a vector of complex AC power generations for each generator in the network,
$V$ is a vector of complex bus voltages, $c$ is a quadratic cost function, and $S^d$ is a constant
vector of complex power demands. Constraint $F_k\leq 0$ represents the set of constraints enforcing
feasibility of the power network for contingency $k$ (see \cite{cain2012history}), where $k=0$
refers to the base-case network, and $\mathrm{NN}_k$ is the neural network
function predicting minimum frequency following contingency $k$.
This formulation is inspired by that of Garcia et al. \cite{garcia2025transient}.
We consider a simple instance of \Cref{eqn:scopf} defined on a 37-bus synthetic test grid
\cite{birchfield2017,hawaii40} with a single contingency outaging generator 5 on bus 23.
Here, the neural network has 117 inputs and 37 outputs.
This surrogate is trained on data from 110 high-fidelity simulations using PowerWorld \cite{powerworldmanual}
with generations and loads uniformly sampled from within a $\pm20\%$ interval of each nominal
value.
These neural networks have between five and ten layers and
between 500 and 1,500 neurons per layer (see \Cref{tab:nn}).
The sparsity structures of this problem's KKT, pivot, and Schur complement matrices are shown
in \Cref{fig:scopf-spy}.
\begin{figure}[h]
  \centering
  \includegraphics[width=\textwidth]{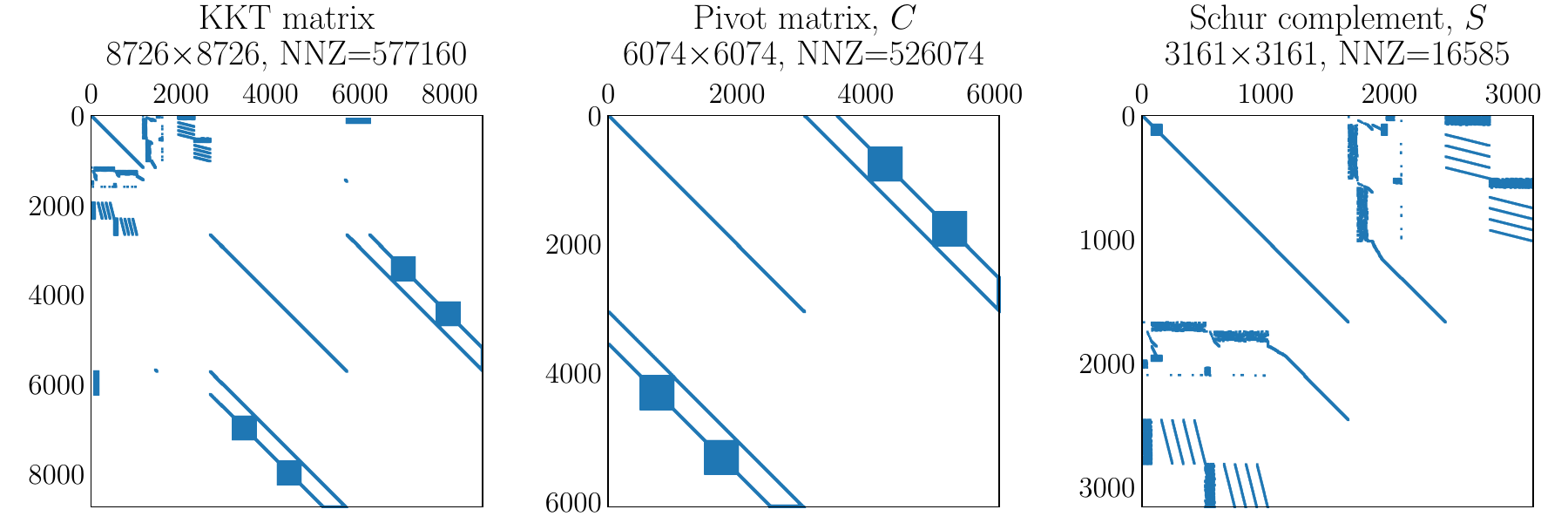}
  \vspace{-0.7cm}
  \caption{Structure of KKT, pivot, and Schur complement matrices for the smallest
  instance of the SCOPF test problem.}
  \label{fig:scopf-spy}
\end{figure}

\subsection{Line switching verification (LSV)}
Our final test problem maximizes load shed due to line switching decisions made
by a neural network that accepts loading conditions and wildfire risk (for every
bus in a power network) as inputs. This is Problem 3 presented by Chevalier et al.
\cite{chevalier2025}.
The goal of the problem in \cite{chevalier2025} is to maximize load shed due to
neural network switching decisions and an operator that acts to minimize a combination of
load shed and wildfire risk. This bilevel problem is given by \Cref{eqn:bilevel-lsv}.
\begin{equation}
  \begin{array}{cll}
    \displaystyle\max_{x,y,p_g} & C_1 (x, y, p_g) \\
    \text{subject to} & y = \mathrm{NN}(x) \\
    & p_g \in \underset{p_g\in\mathcal{F}(x,y)}{\text{argmin}} C_2 (x, y, p_g) \\
  \end{array}
  \label{eqn:bilevel-lsv}
\end{equation}
Here, both inner and outer problems are non-convex. The feasible set of the inner problem,
$\mathcal{F}(x,y)$, is defined by the power flow equations with fixed loading, $x$, and
line switching decisions, $y$.
To approximate this problem with a single-level optimization problem, we follow
\cite{chevalier2025} and relax the inner problem with a second-order cone relaxation
\cite{jabr2006}, dualize the (now convex) inner problem, and combine the two levels
of the optimization to yield a single-level nonconvex maximization problem.
We refer the reader to \cite{chevalier2025} for further details, including a description
of how these neural networks are trained.
\begin{figure}[h]
  \centering
  \includegraphics[width=\textwidth]{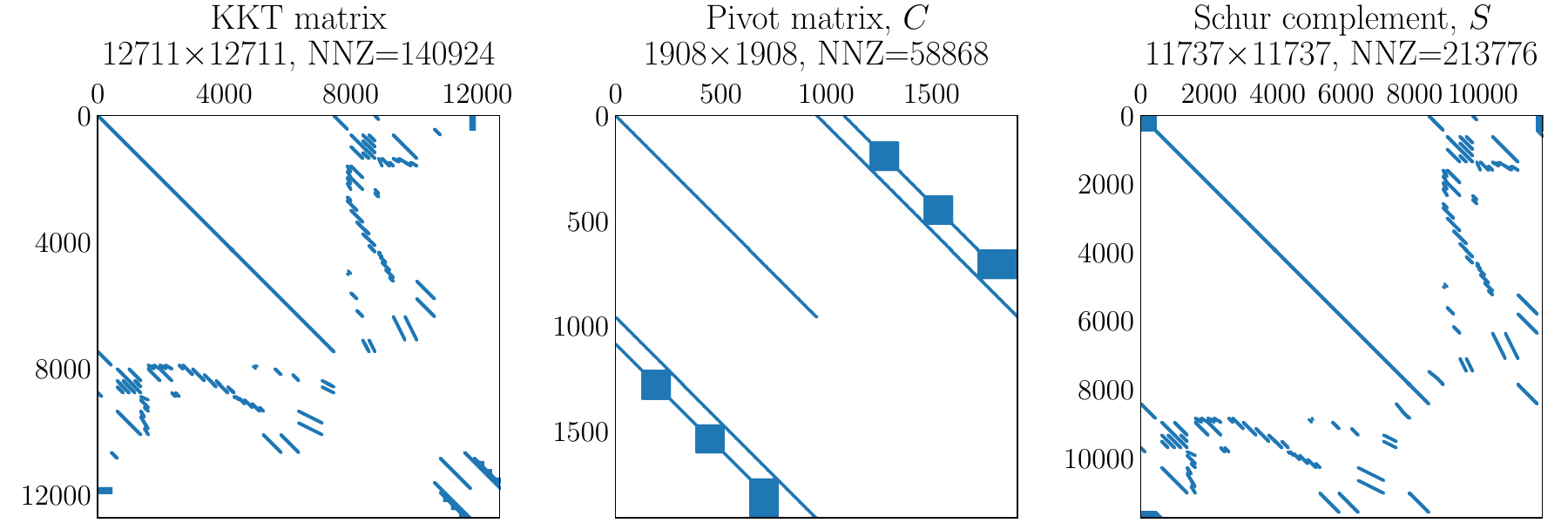}
  \vspace{-0.7cm}
  \caption{Structure of KKT, pivot, and Schur complement matrices for the smallest
  instance of the LSV test problem.}
  \label{fig:lsv-spy}
\end{figure}

\section{Results}
\label{sec:results}

\subsection{Computational setting}
Our method is implemented in Julia 1.11 \cite{julia}.
Optimization models are implemented in JuMP \cite{jump1},
using PowerModels \cite{powermodels} and PowerModelsSecurityConstrained \cite{powermodelssecurity}
for the SCOPF problem.
Neural network models are trained using PyTorch and embedded into the JuMP models using
MathOptAI \cite{dowson2025moai}.
KKT matrices are extracted from each optimization problem using
MadNLP \cite{shin2021graph,shin2024accelerating,madnlpgithub}
and we use block triangular and block-diagonal decompositions provided by MathProgIncidence \cite{parker2023dulmage}.
We compare our method against the HSL MA57 \cite{ma57} and MA86 \cite{ma86} linear solvers.
While MA86 can exploit parallel processing of the assembly tree and our algorithm can exploit
L3 BLAS (matrix-matrix operations) parallelism, we run all experiments with a single processor
in order to make a constant-hardware comparison. This is done by setting the
number of OMP threads (used by MA86) to one and with the following Julia options:
\\

\begin{center}\begin{minipage}{0.6\textwidth}
\begin{verbatim}
LinearAlgebra.BLAS.set_num_threads(1)
LinearAlgebra.BLAS.lbt_set_num_threads(1)
\end{verbatim}
\end{minipage}\end{center}

\medskip

These results were produced using the Selene supercomputer
at Los Alamos National Laboratory on a compute node with
two Intel 8468V 3.8 GHz processors and 2TB of RAM.
Code and instructions to reproduce these results can be found at
\url{https://github.com/Robbybp/moai-examples}.

\subsection{Numerical results}
\label{sec:num-results}

To evaluate our method for each combination of optimization problem and neural
network, we extract KKT matrices (and corresponding right-hand-sides) from the
first ten iterations of the MadNLP interior point method\footnote{
To make sure we compare different linear solvers on the same KKT systems,
we always compute MadNLP's search direction with our linear solver.}
and compare the performance of our method with that of MA57 and MA86 on these
linear systems.
The KKT matrices at each iteration have the same structure, so we only perform a
single symbolic factorization (referred to as ``initialization'').
Then we factorize the KKT matrix and backsolve with each of our ten KKT systems.
Backsolves are repeated in iterative refinement until
the max-norm of the residual is below $10^{-5}$.
\Cref{tab:runtime-summary} reports the total time required, broken down into
initialization, factorization, and backsolve steps for the ten linear
systems as well as the average residual max-norm and average number of
refinement iterations required.
These backsolve runtimes include all refinement iterations.
The speedup factor is a ratio of our solver's runtime in the factorization
plus backsolve phases to that of MA57 or MA86. We omit initialization time from this
calculation as this step is amortized over many iterations of an interior point
method.
We note that our method computes the same inertia as MA57 or MA86 in every case.
\begin{table}
  \centering
  \caption{Runtime results for ten KKT matrix factorizations for each model-solver
  combination. For each problem, we compare our method to a state-of-the-art alternative.}
  \resizebox{\textwidth}{!}{
  \small
  \begin{tabular}{cccccccccc}
\toprule
\multirow{2}{*}{Model} & \multirow{2}{*}{Solver} & \multirow{2}{*}{NN param.} & \multicolumn{3}{c}{Total runtime (s)} & \multicolumn{3}{c}{Average} \\
\cmidrule(lr){4-6}\cmidrule(lr){7-9}
& & & Init. & Fact. & Solve & Resid. & Refin. iter. & Speedup ($\times$)\\
\midrule
\multirow{3}{*}{MNIST} &   \multirow{3}{*}{MA57} &        1M &       0.04 &         2 &       0.1 &  2.7E-06 &                0 &      -- \\
                       &                         &        5M &        0.1 &        22 &       0.5 &  3.3E-07 &                0 &      -- \\
                       &                         &       18M &          1 &       125 &         2 &  1.3E-06 &                0 &      -- \\
\midrule
\multirow{3}{*}{MNIST} &   \multirow{3}{*}{Ours} &        1M &        0.5 &         6 &       0.4 &  2.4E-08 &                1 &    0.25 \\
                       &                         &        5M &          3 &        12 &         1 &  8.2E-07 &                2 &     1.7 \\
                       &                         &       18M &         10 &        31 &        13 &  3.1E-06 &                7 &     2.9 \\
\midrule
\multirow{3}{*}{SCOPF} &   \multirow{3}{*}{MA86} &      578k &       0.03 &         3 &       0.1 &  7.3E-07 &                1 &      -- \\
                       &                         &        4M &        0.2 &        32 &         1 &  2.9E-10 &                1 &      -- \\
                       &                         &       15M &        0.8 &       171 &         4 &  3.5E-10 &                1 &      -- \\
\midrule
\multirow{3}{*}{SCOPF} &   \multirow{3}{*}{Ours} &      578k &        0.2 &         1 &       0.1 &  1.9E-06 &                1 &     2.6 \\
                       &                         &        4M &          2 &         3 &       0.7 &  1.4E-06 &                1 &      10 \\
                       &                         &       15M &          7 &         9 &         3 &  2.5E-08 &                1 &      15 \\
\midrule
\multirow{3}{*}{LSV}   &   \multirow{3}{*}{MA86} &      111k &       0.01 &       0.3 &      0.03 &  1.1E-08 &                0 &      -- \\
                       &                         &      837k &       0.04 &         5 &       0.1 &  6.8E-07 &                0 &      -- \\
                       &                         &        9M &        0.5 &       149 &         1 &  1.3E-06 &                0 &      -- \\
\midrule
\multirow{3}{*}{LSV}   &   \multirow{3}{*}{Ours} &      111k &       0.06 &       0.6 &      0.04 &  1.4E-06 &                0 &    0.51 \\
                       &                         &      837k &        0.4 &         3 &       0.3 &  2.1E-06 &                2 &     1.6 \\
                       &                         &        9M &          6 &        14 &         3 &  2.9E-06 &                3 &     8.8 \\
\bottomrule
  \end{tabular}
}
  \label{tab:runtime-summary}
\end{table}

Our method achieves lower factorization times at the cost of more expensive initializations
and backsolves. Our faster factorizations make up for slower backsolves
and achieve overall speedups between 2.9$\times$ and 15$\times$ 
for the largest neural networks considered
even though more iterations of iterative refinement are sometimes required.

We hypothesize that our method outperforms MA57 and MA86
due to its limited fill-in when factorizing the block triangularizable pivot matrix.
As discussed in \Cref{sec:fillin}, we expect MA57 and MA86 to incur significant fill-in when factorizing the
original KKT matrix or the pivot matrix.
\Cref{tab:fill-in} shows that this is the case.
This table shows the size of the matrix factors and number of floating point
operations required by these solvers on the original, pivot, and Schur complement matrices.
For the original KKT and pivot matrices, MA57 and MA86 require factors 3-4$\times$ larger
than the original matrix. We attribute this fill-in partially to the relatively
small number of $2\times 2$ pivots selected by these algorithms. The number of $2\times 2$
pivots is equal to about 25\% of the matrix dimension, corresponding to about 50\%
of rows and columns in the factor matrices. Our analysis in \Cref{sec:fillin} indicates
that fill-in would be reduced if all of the pivots (at least in the pivot matrix)
were $2\times 2$.
\begin{table}
  \centering
  \caption{Factor sizes and floating point operations required to factorize KKT, pivot, and Schur complement
  matrices}
  \resizebox{\textwidth}{!}{
  \small
  \begin{tabular}{ccccccccc}
\toprule
Model & Solver & NN param. & Matrix & Matrix dim. &  NNZ & Factor NNZ & FLOPs & N. 2$\times$2 pivots \\
\midrule
\multirow{3}{*}{MNIST} &   \multirow{3}{*}{MA57} &       \multirow{3}{*}{18M} &    KKT &         44k &  18M &        65M &  177B &                  10k \\
                       &                         &                            &  Pivot &         41k &  16M &        50M &  138B &                  10k \\
                       &                         &                            &  Schur &          3k & 321k &       323k &  168M &                    0 \\
\midrule
\multirow{3}{*}{SCOPF} &   \multirow{3}{*}{MA86} &       \multirow{3}{*}{15M} &    KKT &         51k &  16M &        72M &  127B &                  17k \\
                       &                         &                            &  Pivot &         48k &  15M &        71M &  124B &                  17k \\
                       &                         &                            &  Schur &          3k &  16k &       104k &    5M &                   1k \\
\midrule
  \multirow{3}{*}{LSV} &   \multirow{3}{*}{MA86} &        \multirow{3}{*}{9M} &    KKT &         36k &   9M &        44M &  105B &                  13k \\
                       &                         &                            &  Pivot &         24k &   8M &        39M &   84B &                   7k \\
                       &                         &                            &  Schur &         11k & 213k &       588k &  136M &                   5k \\
\bottomrule
  \end{tabular}
}
  \label{tab:fill-in}
\end{table}

While our method already outperforms MA57 and MA86 running on identical
hardware, there is significant opportunity to further improve our implementation.
\Cref{tab:breakdown} shows a breakdown of runtime of the factorization
and backsolve phases of our implementation.
We first note that our factorization phase is dominated either
by the cost to factorize the pivot matrix, $C$,
or by the cost to build the Schur complement, i.e., performing the matrix subtraction, backsolve,
and multiplication necessary to compute $S=A-B^T C^{-1}B$.
These are the steps in lines \ref{alg:schur-factorize:fact-C} and \ref{alg:schur-factorize:build-S}
of \Cref{alg:schur-factorize}.
When building the Schur complement, backsolves involving $C$ (as described by
\Cref{eqn:bt-backsolve}) are the dominant computation cost.
While the blocks of the solution matrix (or vector), $X_i$,
need to be computed sequentially, the subtraction and multiplication operations
used to construct these blocks
(i.e., $B_i - \sum_{j=1}^{i-1}C_{ji}X_j$) can exploit single instruction, multiple
data (SIMD) parallelism on multiple CPU cores or a GPU.
Depending on the structure of $C_{ii}$ (i.e., if it is block-diagonal),
the backsolve may be parallelized as well.
Factorizing the individual diagonal blocks of the pivot matrix (i.e., $C_{ii}$)
can also be parallelized.
\begin{table}
  \centering
  \caption{Breakdown of solve times with our Schur complement solver}
  \small
  \begin{tabular}{cccccccccc}
\toprule
\multirow{2}{*}{Model}
& \multirow{2}{*}{NN param.}
& \multirow{2}{*}{Factorize (s)}
& \multicolumn{3}{c}{Percent of factorize time (\%)}
& \multirow{2}{*}{Backsolve (s)}
\\
\cmidrule{4-6}
&&& Build Schur & Schur & Pivot \\
\midrule
\multirow{3}{*}{MNIST} &        1M &         6 &          88 &               1 &               6 &       0.4 \\
                       &        5M &        13 &          76 &               1 &              17 &         2 \\
                       &       18M &        32 &          70 &               0 &              26 &        13 \\
\midrule
\multirow{3}{*}{SCOPF} &      578k &       0.4 &          46 &               2 &              44 &       0.1 \\
                       &        4M &         3 &          26 &               0 &              67 &       0.8 \\
                       &       15M &        10 &          25 &               0 &              68 &         3 \\
\midrule
\multirow{3}{*}{LSV}   &      111k &       0.6 &          72 &              14 &               3 &      0.05 \\
                       &      837k &         2 &          84 &               3 &               9 &       0.3 \\
                       &        9M &        15 &          65 &               0 &              29 &         3 \\
\bottomrule
  \end{tabular}
  \label{tab:breakdown}
\end{table}


\section{Conclusion}
We have shown that exploiting block triangular submatrices can yield
up to 15$\times$ performance improvements when solving KKT systems compared to
off-the-shelf multifrontal and supernodal methods. We have provided a theoretical
justification for this speed up based on limited fill-in achieved with our
method and have justified that the lack of inertia information from an unsymmetric
block triangular solver is not prohibitive to using this method in nonlinear
optimization methods. In fact, we have shown that our decomposition method will
fail if inertia correction is attempted in the block triangular matrix that
we exploit.

These results motivate several directions for future work.
First, the methods we have developed can be applied to many other optimization
problems with block triangular structures. For example, many sequential (dynamic
or multi-period) optimization problems have this block triangular structure.
Additionally, many problems that are not explicitly sequential
may have large block triangular submatrices that can be exploited
by our methods. We have shown in previous work that large triangular
submatrices can be automatically identified \cite{naik2025aggregation};
future work should use Schur complement decompositions to exploit these
submatrices in KKT systems. Additionally, the methods developed in
\cite{naik2025aggregation} may be relaxed to automatically identify
large \textit{block}-triangular submatrices.
Finally, a key advantage of our proposed method is that several of
the computational bottlenecks (backsolving with the pivot matrix
and constructing the Schur complement) can be parallelized on SIMD
hardware. This decomposition should be implemented on GPUs with
effort made to reduce CPU-GPU data transfer.

This work demonstrates that structure-exploiting decomposition algorithms
can accelerate linear system solves even in the absence of advanced hardware.
We intend this work to motivate research into additional structures that
can be exploited in KKT systems. We believe that there is a significant
opportunity improve performance of optimization algorithms through tighter
integration between optimization problems and linear solvers. Here, we propose
one method for improving performance in this way, although there is much
that can be done to further develop this method and others.

\section*{Acknowledgments}
This work was funded by the Los Alamos National Laboratory LDRD program
under the Artimis project, the Center for Nonlinear Studies, and an ISTI
Rapid Response award.

We thank Alexis Montoison for helpful conversations about our method, the
HSL libraries, and an earlier draft of this manuscript.

\appendix

\section{Size of submatrices}
\Cref{tab:matrix-size} contains the dimensions and number of (structural) nonzero
entries for KKT matrices and each submatrix considered in \Cref{eqn:kkt-partition}.
\begin{table}
  \centering
  \small
  \caption{Size of submatrices for each model and neural network}
  \begin{tabular}{cccccc}
\toprule
Model & NN param. & Matrix & N. row & N. col &  NNZ \\
\midrule
\multirow{5}{*}{MNIST} & \multirow{5}{*}{1M} &    KKT &    13k &    13k &   1M \\
                       &                     &    $A$ &     3k &     3k &   5k \\
                       &                     &    $B$ &    10k &     3k & 401k \\
                       &                     &  Pivot matrix ($C$) &    10k &    10k &   1M \\
                       &                     &  Schur complement ($S$) &     3k &     3k & 321k \\
\midrule
\multirow{5}{*}{MNIST} & \multirow{5}{*}{5M} &    KKT &    23k &    23k &   5M \\
                       &                     &    $A$ &     3k &     3k &   5k \\
                       &                     &    $B$ &    20k &     3k & 802k \\
                       &                     &  Pivot matrix ($C$) &    20k &    20k &   4M \\
                       &                     &  Schur complement ($S$) &     3k &     3k & 321k \\
\midrule
\multirow{5}{*}{MNIST} & \multirow{5}{*}{18M} &    KKT &    44k &    44k &  18M \\
                       &                      &    $A$ &     3k &     3k &   5k \\
                       &                      &    $B$ &    41k &     3k &   1M \\
                       &                      &  Pivot matrix ($C$) &    41k &    41k &  16M \\
                       &                      &  Schur complement ($S$) &     3k &     3k & 321k \\
\midrule
\multirow{5}{*}{SCOPF} & \multirow{5}{*}{578k} &    KKT &     9k &     9k & 578k \\
                       &                       &    $A$ &     3k &     3k &  10k \\
                       &                       &    $B$ &     6k &     3k &  39k \\
                       &                       &  Pivot matrix ($C$) &     6k &     6k & 526k \\
                       &                       &  Schur complement ($S$) &     3k &     3k &  16k \\
\midrule
\multirow{5}{*}{SCOPF} & \multirow{5}{*}{4M} &    KKT &    23k &    23k &   4M \\
                       &                     &    $A$ &     3k &     3k &  10k \\
                       &                     &    $B$ &    20k &     3k &  78k \\
                       &                     &  Pivot matrix ($C$) &    20k &    20k &   4M \\
                       &                     &  Schur complement ($S$) &     3k &     3k &  16k \\
\midrule
\multirow{5}{*}{SCOPF} & \multirow{5}{*}{15M} &    KKT &    51k &    51k &  16M \\
                       &                      &    $A$ &     3k &     3k &  10k \\
                       &                      &    $B$ &    48k &     3k & 117k \\
                       &                      &  Pivot matrix ($C$) &    48k &    48k &  15M \\
                       &                      &  Schur complement ($S$) &     3k &     3k &  16k \\
\midrule
\multirow{5}{*}{LSV}   & \multirow{5}{*}{111k} &    KKT &    13k &    13k & 142k \\
                       &                       &    $A$ &    11k &    11k &  28k \\
                       &                       &    $B$ &     1k &    11k &  54k \\
                       &                       &  Pivot matrix ($C$) &     1k &     1k &  58k \\
                       &                       &  Schur complement ($S$) &    11k &    11k & 213k \\
  \midrule
  \multirow{5}{*}{LSV} & \multirow{5}{*}{837k} &    KKT &    18k &    18k & 876k \\
                       &                       &    $A$ &    11k &    11k &  28k \\
                       &                       &    $B$ &     6k &    11k & 216k \\
                       &                       &  Pivot matrix ($C$) &     6k &     6k & 627k \\
                       &                       &  Schur complement ($S$) &    11k &    11k & 213k \\
  \midrule
  \multirow{5}{*}{LSV} & \multirow{5}{*}{9M} &    KKT &    36k &    36k &   9M \\
                       &                     &    $A$ &    11k &    11k &  28k \\
                       &                     &    $B$ &    24k &    11k & 866k \\
                       &                     &  Pivot matrix ($C$) &    24k &    24k &   8M \\
                       &                     &  Schur complement ($S$) &    11k &    11k & 213k \\
\bottomrule
  \end{tabular}
  \label{tab:matrix-size}
\end{table}

\section{Comparison of linear solvers}
To justify our use of MA57 and MA86, we evaluate the time required to solve
MadNLP's KKT system at the initial guess with the solvers
MA57 \cite{ma57}, MA86 \cite{ma86}, and MA97 \cite{ma97} from the HSL library.
The results are shown in Table \ref{tab:solver-comparison}.
Each solver uses default options other than the pivot order, which we
set to an approximate minimum degree (AMD) \cite{amestoy1996amd}
variant with efficient handling of dense rows \cite{amestoy2004dense}.
The results indicate that MA57 is the most performant solver for the MNIST test problem,
while MA86 is most performant for the SCOPF and LSV problems.
While MA86 and MA97 can exploit multicore parallelism, we run all solvers (including ours)
with a single thread to make a constant-hardware comparison.
\begin{table}
  \centering
  \caption{Runtime of three linear solvers on the KKT matrix at the initial guess}
  \small
  \begin{tabular}{ccccccc}
\toprule
Model & Solver & NN param. & Initialize & Factorize & Backsolve & Residual \\
\midrule
\multirow{9}[3]{*}{MNIST} &   \multirow{3}{*}{MA57} &        1M &       0.04 &       0.2 &      0.01 &  5.6E-10 \\
                       &                         &        5M &        0.2 &         2 &      0.05 &  9.5E-11 \\
                       &                         &       18M &        0.5 &        13 &       0.2 &  1.8E-11 \\
                       \cmidrule(ll){2-7}
                       &   \multirow{3}{*}{MA86} &        1M &       0.06 &       0.7 &      0.02 &  6.7E-16 \\
                       &                         &        5M &        0.2 &         5 &      0.09 &  1.1E-15 \\
                       &                         &       18M &          1 &        30 &       0.3 &  6.1E-15 \\
                       \cmidrule(ll){2-7}
                       &   \multirow{3}{*}{MA97} &        1M &       0.08 &         2 &      0.02 &  1.8E-15 \\
                       &                         &        5M &        0.4 &        15 &      0.08 &  9.1E-16 \\
                       &                         &       18M*&        --  &        -- &      --   &  --      \\
\midrule
\multirow{9}[3]{*}{SCOPF} &   \multirow{3}{*}{MA57} &      578k &       0.02 &       0.3 &  $< 0.01$ &  7.7E-09 \\
                          &                         &        4M &        0.1 &         5 &      0.05 &  5.9E-09 \\
                          &                         &       15M &        0.5 &        28 &       0.2 &  5.0E-09 \\
                          \cmidrule(ll){2-7}
                          &   \multirow{3}{*}{MA86} &      578k &       0.02 &       0.3 &      0.01 &  5.3E-09 \\
                          &                         &        4M &        0.2 &         3 &      0.06 &  2.1E-09 \\
                          &                         &       15M &        0.8 &        17 &       0.3 &  1.8E-09 \\
                          \cmidrule(ll){2-7}
                          &   \multirow{3}{*}{MA97} &      578k &       0.03 &         1 &      0.01 &  7.4E-09 \\
                          &                         &        4M &        0.3 &        11 &      0.06 &  2.9E-09 \\
                          &                         &       15M*&        --  &        -- &      --   &  --      \\
\midrule
\multirow{9}[3]{*}{LSV}   &   \multirow{3}{*}{MA57} &      111k &       0.01 &      0.03 &  $< 0.01$ &  8.7E-11 \\
                          &                         &      837k &        0.2 &       0.7 &      0.01 &  2.8E-12 \\
                          &                         &        9M$^\dag$&  --  &       --  &      --   &  --      \\
                          \cmidrule(ll){2-7}
                          &   \multirow{3}{*}{MA86} &      111k &       0.01 &      0.03 &  $< 0.01$ &  4.4E-09 \\
                          &                         &      837k &       0.04 &       0.5 &      0.01 &  2.6E-12 \\
                          &                         &        9M &        0.5 &        15 &       0.2 &  2.0E-12 \\
                          \cmidrule(ll){2-7}
                          &   \multirow{3}{*}{MA97} &      111k &       0.01 &      0.03 &  $< 0.01$ &  6.1E-11 \\
                          &                         &      837k &       0.05 &         1 &      0.01 &  4.5E-12 \\
                          &                         &       18M*&        --  &        -- &      --   &  --      \\
\bottomrule
  \multicolumn{7}{l}{\scriptsize $^*$Segmentation fault}\\[-2pt]
  \multicolumn{7}{l}{\scriptsize $^\dag$Int32 overflow}\\[-2pt]
  \end{tabular}
  \label{tab:solver-comparison}
\end{table}

\bibliographystyle{siamplain}
\bibliography{ref}
\end{document}


\maketitle

\section{Complete numerical results}
\label{sec:complete-runtime}
{\small
  \begin{longtable}{cccccccccccc}
    \caption{Complete numerical results} \\
\toprule
\multirow{2}{*}{Model} & \multirow{2}{*}{Solver} & \multirow{2}{*}{NN} & \multirow{2}{*}{Sample} & \multicolumn{3}{c}{Runtime (s)} & \multirow{2}{*}{Neg. eig.} & \multirow{2}{*}{Residual} & \multirow{2}{*}{Refinement iter.} & \multirow{2}{*}{Speedup}\\
\cmidrule{5-7}
& & & & Init. & Fact. & Solve & & \\
\midrule
\endfirsthead
\toprule
\multirow{2}{*}{Model} & \multirow{2}{*}{Solver} & \multirow{2}{*}{NN} & \multirow{2}{*}{Sample} & \multicolumn{3}{c}{Runtime (s)} & \multirow{2}{*}{Neg. eig.} & \multirow{2}{*}{Residual} & \multirow{2}{*}{Refinement iter.} & \multirow{2}{*}{Speedup}\\
\cmidrule{5-7}
& & & & Init. & Fact. & Solve & & \\
\midrule
\endhead
MNIST &   MA57 &        1M &      1 &       0.04 &       0.2 &      0.01 &         5936 &  5.6E-10 &                0 &      -- \\
MNIST &   MA57 &        1M &      2 &       0.04 &       0.2 &      0.01 &         5936 &  8.2E-06 &                0 &      -- \\
MNIST &   MA57 &        1M &      3 &       0.04 &       0.2 &      0.02 &         5936 &  4.1E-13 &                1 &      -- \\
MNIST &   MA57 &        1M &      4 &       0.04 &       0.2 &      0.01 &         5936 &  6.1E-06 &                0 &      -- \\
MNIST &   MA57 &        1M &      5 &       0.04 &       0.2 &      0.01 &         5936 &  2.9E-06 &                0 &      -- \\
MNIST &   MA57 &        1M &      6 &       0.04 &       0.2 &      0.01 &         5936 &  2.1E-06 &                0 &      -- \\
MNIST &   MA57 &        1M &      7 &       0.04 &       0.2 &      0.01 &         5936 &  1.8E-06 &                0 &      -- \\
MNIST &   MA57 &        1M &      8 &       0.04 &       0.2 &      0.01 &         5936 &  7.9E-07 &                0 &      -- \\
MNIST &   MA57 &        1M &      9 &       0.04 &       0.2 &      0.01 &         5936 &  1.2E-06 &                0 &      -- \\
MNIST &   MA57 &        1M &     10 &       0.04 &       0.2 &      0.01 &         5936 &  3.6E-06 &                0 &      -- \\
\midrule
MNIST &   MA57 &        5M &      1 &        0.1 &         2 &      0.05 &        11056 &  9.5E-11 &                0 &      -- \\
MNIST &   MA57 &        5M &      2 &        0.1 &         2 &      0.05 &        11056 &  1.9E-07 &                0 &      -- \\
MNIST &   MA57 &        5M &      3 &        0.1 &         2 &      0.05 &        11056 &  3.2E-07 &                0 &      -- \\
MNIST &   MA57 &        5M &      4 &        0.1 &         2 &      0.05 &        11056 &  2.2E-07 &                0 &      -- \\
MNIST &   MA57 &        5M &      5 &        0.1 &         2 &      0.05 &        11056 &  5.4E-07 &                0 &      -- \\
MNIST &   MA57 &        5M &      6 &        0.1 &         2 &      0.05 &        11056 &  2.0E-07 &                0 &      -- \\
MNIST &   MA57 &        5M &      7 &        0.1 &         2 &      0.05 &        11056 &  3.4E-07 &                0 &      -- \\
MNIST &   MA57 &        5M &      8 &        0.1 &         2 &      0.05 &        11056 &  4.8E-07 &                0 &      -- \\
MNIST &   MA57 &        5M &      9 &        0.1 &         2 &      0.05 &        11056 &  2.7E-07 &                0 &      -- \\
MNIST &   MA57 &        5M &     10 &        0.1 &         2 &      0.05 &        11056 &  7.4E-07 &                0 &      -- \\
\midrule
MNIST &   MA57 &       18M &      1 &          1 &        13 &       0.2 &        21296 &  1.8E-11 &                0 &      -- \\
MNIST &   MA57 &       18M &      2 &          1 &        12 &       0.2 &        21296 &  3.7E-07 &                0 &      -- \\
MNIST &   MA57 &       18M &      3 &          1 &        12 &       0.2 &        21296 &  8.1E-07 &                0 &      -- \\
MNIST &   MA57 &       18M &      4 &          1 &        13 &       0.2 &        21296 &  1.2E-06 &                0 &      -- \\
MNIST &   MA57 &       18M &      5 &          1 &        12 &       0.2 &        21296 &  4.1E-07 &                0 &      -- \\
MNIST &   MA57 &       18M &      6 &          1 &        13 &       0.2 &        21296 &  3.9E-07 &                0 &      -- \\
MNIST &   MA57 &       18M &      7 &          1 &        13 &       0.2 &        21296 &  2.1E-06 &                0 &      -- \\
MNIST &   MA57 &       18M &      8 &          1 &        13 &       0.2 &        21296 &  3.5E-07 &                0 &      -- \\
MNIST &   MA57 &       18M &      9 &          1 &        13 &       0.2 &        21296 &  1.5E-06 &                0 &      -- \\
MNIST &   MA57 &       18M &     10 &          1 &        13 &       0.2 &        21296 &  6.0E-06 &                0 &      -- \\
\midrule
MNIST &   Ours &        1M &      1 &        0.5 &       0.5 &      0.03 &         5936 &  1.1E-07 &                0 &    0.29 \\
MNIST &   Ours &        1M &      2 &        0.5 &       0.5 &      0.04 &         5936 &  2.3E-08 &                1 &    0.28 \\
MNIST &   Ours &        1M &      3 &        0.5 &       0.5 &      0.04 &         5936 &  3.1E-08 &                1 &     0.3 \\
MNIST &   Ours &        1M &      4 &        0.5 &       0.5 &      0.04 &         5936 &  2.9E-08 &                1 &    0.29 \\
MNIST &   Ours &        1M &      5 &        0.5 &         1 &      0.04 &         5936 &  1.7E-08 &                1 &    0.16 \\
MNIST &   Ours &        1M &      6 &        0.5 &       0.5 &      0.04 &         5936 &  1.0E-08 &                1 &     0.3 \\
MNIST &   Ours &        1M &      7 &        0.5 &       0.5 &      0.04 &         5936 &  4.3E-10 &                1 &    0.29 \\
MNIST &   Ours &        1M &      8 &        0.5 &         1 &      0.04 &         5936 &  1.4E-09 &                1 &    0.15 \\
MNIST &   Ours &        1M &      9 &        0.5 &       0.5 &      0.04 &         5936 &  4.5E-09 &                1 &     0.3 \\
MNIST &   Ours &        1M &     10 &        0.5 &       0.5 &      0.04 &         5936 &  1.2E-08 &                1 &    0.29 \\
\midrule
MNIST &   Ours &        5M &      1 &          3 &         1 &       0.1 &        11056 &  2.6E-07 &                1 &     1.8 \\
MNIST &   Ours &        5M &      2 &          3 &         1 &       0.2 &        11056 &  1.1E-06 &                2 &     1.7 \\
MNIST &   Ours &        5M &      3 &          3 &         1 &       0.2 &        11056 &  5.5E-07 &                2 &     1.7 \\
MNIST &   Ours &        5M &      4 &          3 &         1 &       0.2 &        11056 &  1.1E-06 &                2 &     1.7 \\
MNIST &   Ours &        5M &      5 &          3 &         1 &       0.2 &        11056 &  1.2E-06 &                2 &     1.7 \\
MNIST &   Ours &        5M &      6 &          3 &         1 &       0.2 &        11056 &  2.4E-07 &                2 &     1.7 \\
MNIST &   Ours &        5M &      7 &          3 &         1 &       0.2 &        11056 &  4.6E-08 &                2 &     1.7 \\
MNIST &   Ours &        5M &      8 &          3 &         1 &       0.2 &        11056 &  5.0E-09 &                2 &     1.7 \\
MNIST &   Ours &        5M &      9 &          3 &         1 &       0.1 &        11056 &  2.1E-06 &                1 &     1.8 \\
MNIST &   Ours &        5M &     10 &          3 &         1 &       0.1 &        11056 &  1.6E-06 &                1 &     1.8 \\
\midrule
MNIST &   Ours &       18M &      1 &         10 &         3 &         2 &        21296 &  9.1E-06 &                9 &     2.4 \\
MNIST &   Ours &       18M &      2 &         10 &         4 &         2 &        21296 &  2.9E-06 &                9 &     2.5 \\
MNIST &   Ours &       18M &      3 &         10 &         3 &         2 &        21296 &  2.4E-06 &               10 &     2.5 \\
MNIST &   Ours &       18M &      4 &         10 &         3 &         2 &        21296 &  4.7E-06 &                9 &     2.8 \\
MNIST &   Ours &       18M &      5 &         10 &         3 &         1 &        21296 &  3.3E-06 &                8 &     2.9 \\
MNIST &   Ours &       18M &      6 &         10 &         3 &         1 &        21296 &  1.4E-06 &                7 &     3.0 \\
MNIST &   Ours &       18M &      7 &         10 &         3 &         1 &        21296 &  5.6E-06 &                5 &     3.3 \\
MNIST &   Ours &       18M &      8 &         10 &         3 &         1 &        21296 &  9.4E-07 &                5 &     3.3 \\
MNIST &   Ours &       18M &      9 &         10 &         3 &       0.8 &        21296 &  5.7E-07 &                4 &     3.4 \\
MNIST &   Ours &       18M &     10 &         10 &         3 &       0.8 &        21296 &  1.1E-07 &                4 &     3.3 \\
\midrule
SCOPF &   MA86 &      578k &      1 &       0.03 &       0.3 &      0.01 &         4534 &  5.3E-09 &                0 &      -- \\
SCOPF &   MA86 &      578k &      2 &       0.03 &       0.3 &      0.01 &         4534 &  9.6E-12 &                1 &      -- \\
SCOPF &   MA86 &      578k &      3 &       0.03 &       0.3 &      0.02 &         4534 &  1.4E-11 &                1 &      -- \\
SCOPF &   MA86 &      578k &      4 &       0.03 &       0.3 &      0.01 &         4534 &  1.3E-11 &                1 &      -- \\
SCOPF &   MA86 &      578k &      5 &       0.03 &       0.3 &      0.01 &         4534 &  7.3E-06 &                0 &      -- \\
SCOPF &   MA86 &      578k &      6 &       0.03 &       0.3 &      0.01 &         4534 &  3.5E-11 &                1 &      -- \\
SCOPF &   MA86 &      578k &      7 &       0.03 &       0.3 &      0.01 &         4534 &  4.7E-11 &                1 &      -- \\
SCOPF &   MA86 &      578k &      8 &       0.03 &       0.3 &      0.01 &         4534 &  6.7E-11 &                1 &      -- \\
SCOPF &   MA86 &      578k &      9 &       0.03 &       0.3 &      0.01 &         4534 &  7.1E-11 &                1 &      -- \\
SCOPF &   MA86 &      578k &     10 &       0.03 &       0.3 &      0.01 &         4534 &  8.4E-11 &                1 &      -- \\
\midrule
SCOPF &   MA86 &        4M &      1 &        0.2 &         3 &      0.06 &        11534 &  2.1E-09 &                0 &      -- \\
SCOPF &   MA86 &        4M &      2 &        0.2 &         3 &       0.1 &        11534 &  1.8E-11 &                1 &      -- \\
SCOPF &   MA86 &        4M &      3 &        0.2 &         3 &       0.1 &        11534 &  3.7E-11 &                1 &      -- \\
SCOPF &   MA86 &        4M &      4 &        0.2 &         3 &       0.1 &        11534 &  3.2E-11 &                1 &      -- \\
SCOPF &   MA86 &        4M &      5 &        0.2 &         3 &       0.1 &        11534 &  5.8E-11 &                1 &      -- \\
SCOPF &   MA86 &        4M &      6 &        0.2 &         3 &       0.1 &        11534 &  6.6E-11 &                1 &      -- \\
SCOPF &   MA86 &        4M &      7 &        0.2 &         3 &       0.1 &        11534 &  8.9E-11 &                1 &      -- \\
SCOPF &   MA86 &        4M &      8 &        0.2 &         3 &       0.1 &        11534 &  1.1E-10 &                1 &      -- \\
SCOPF &   MA86 &        4M &      9 &        0.2 &         3 &       0.1 &        11534 &  1.6E-10 &                1 &      -- \\
SCOPF &   MA86 &        4M &     10 &        0.2 &         3 &       0.1 &        11534 &  2.2E-10 &                1 &      -- \\
\midrule
SCOPF &   MA86 &       15M &      1 &        0.8 &        17 &       0.3 &        25534 &  1.8E-09 &                0 &      -- \\
SCOPF &   MA86 &       15M &      2 &        0.8 &        17 &       0.4 &        25534 &  1.2E-10 &                1 &      -- \\
SCOPF &   MA86 &       15M &      3 &        0.8 &        17 &       0.4 &        25534 &  5.8E-11 &                1 &      -- \\
SCOPF &   MA86 &       15M &      4 &        0.8 &        17 &       0.4 &        25534 &  7.3E-11 &                1 &      -- \\
SCOPF &   MA86 &       15M &      5 &        0.8 &        17 &       0.5 &        25534 &  1.4E-10 &                1 &      -- \\
SCOPF &   MA86 &       15M &      6 &        0.8 &        17 &       0.4 &        25534 &  1.1E-10 &                1 &      -- \\
SCOPF &   MA86 &       15M &      7 &        0.8 &        17 &       0.4 &        25534 &  9.3E-11 &                1 &      -- \\
SCOPF &   MA86 &       15M &      8 &        0.8 &        17 &       0.4 &        25534 &  4.5E-10 &                1 &      -- \\
SCOPF &   MA86 &       15M &      9 &        0.8 &        17 &       0.4 &        25534 &  2.5E-10 &                1 &      -- \\
SCOPF &   MA86 &       15M &     10 &        0.8 &        17 &       0.5 &        25534 &  4.6E-10 &                1 &      -- \\
\midrule
SCOPF &   Ours &      578k &      1 &        0.2 &      0.04 &      0.01 &         4534 &  4.1E-09 &                0 &     5.8 \\
SCOPF &   Ours &      578k &      2 &        0.2 &      0.04 &      0.01 &         4534 &  4.0E-06 &                0 &     5.7 \\
SCOPF &   Ours &      578k &      3 &        0.2 &      0.04 &      0.01 &         4534 &  9.3E-06 &                0 &     5.3 \\
SCOPF &   Ours &      578k &      4 &        0.2 &       0.5 &      0.01 &         4534 &  5.2E-06 &                0 &    0.49 \\
SCOPF &   Ours &      578k &      5 &        0.2 &      0.04 &      0.01 &         4534 &  4.5E-11 &                1 &     5.1 \\
SCOPF &   Ours &      578k &      6 &        0.2 &      0.05 &      0.01 &         4534 &  3.4E-10 &                1 &     4.5 \\
SCOPF &   Ours &      578k &      7 &        0.2 &      0.05 &      0.01 &         4534 &  3.6E-08 &                1 &     4.7 \\
SCOPF &   Ours &      578k &      8 &        0.2 &      0.04 &      0.01 &         4534 &  3.2E-08 &                1 &     4.8 \\
SCOPF &   Ours &      578k &      9 &        0.2 &      0.04 &      0.01 &         4534 &  8.2E-09 &                1 &     4.9 \\
SCOPF &   Ours &      578k &     10 &        0.2 &      0.04 &      0.01 &         4534 &  8.9E-08 &                1 &     5.0 \\
\midrule
SCOPF &   Ours &        4M &      1 &          2 &       0.3 &      0.06 &        11534 &  1.2E-09 &                0 &     9.7 \\
SCOPF &   Ours &        4M &      2 &          2 &       0.2 &      0.06 &        11534 &  6.9E-06 &                0 &      11 \\
SCOPF &   Ours &        4M &      3 &          2 &       0.3 &      0.05 &        11534 &  6.1E-06 &                0 &      11 \\
SCOPF &   Ours &        4M &      4 &          2 &       0.2 &      0.08 &        11534 &  1.7E-09 &                1 &      10 \\
SCOPF &   Ours &        4M &      5 &          2 &       0.2 &      0.08 &        11534 &  3.4E-09 &                1 &      10 \\
SCOPF &   Ours &        4M &      6 &          2 &       0.2 &      0.08 &        11534 &  5.3E-09 &                1 &      10 \\
SCOPF &   Ours &        4M &      7 &          2 &       0.2 &      0.08 &        11534 &  1.6E-08 &                1 &      10 \\
SCOPF &   Ours &        4M &      8 &          2 &       0.3 &      0.08 &        11534 &  2.9E-08 &                1 &     7.8 \\
SCOPF &   Ours &        4M &      9 &          2 &       0.2 &      0.08 &        11534 &  2.1E-07 &                1 &      10 \\
SCOPF &   Ours &        4M &     10 &          2 &       0.2 &      0.08 &        11534 &  3.9E-07 &                1 &      10 \\
\midrule
SCOPF &   Ours &       15M &      1 &          7 &         1 &       0.2 &        25534 &  1.6E-09 &                0 &      15 \\
SCOPF &   Ours &       15M &      2 &          7 &       0.9 &       0.3 &        25534 &  1.2E-10 &                1 &      15 \\
SCOPF &   Ours &       15M &      3 &          7 &       0.9 &       0.3 &        25534 &  6.3E-11 &                1 &      15 \\
SCOPF &   Ours &       15M &      4 &          7 &       0.9 &       0.3 &        25534 &  7.0E-09 &                1 &      15 \\
SCOPF &   Ours &       15M &      5 &          7 &       0.9 &       0.3 &        25534 &  7.9E-09 &                1 &      15 \\
SCOPF &   Ours &       15M &      6 &          7 &         1 &       0.3 &        25534 &  6.0E-09 &                1 &      15 \\
SCOPF &   Ours &       15M &      7 &          7 &       0.9 &       0.4 &        25534 &  1.8E-09 &                1 &      14 \\
SCOPF &   Ours &       15M &      8 &          7 &       0.9 &       0.3 &        25534 &  5.5E-08 &                1 &      15 \\
SCOPF &   Ours &       15M &      9 &          7 &       0.9 &       0.3 &        25534 &  4.9E-08 &                1 &      15 \\
SCOPF &   Ours &       15M &     10 &          7 &         1 &       0.3 &        25534 &  1.2E-07 &                1 &      15 \\
\midrule
  LSV &   MA86 &      111k &      1 &       0.01 &      0.03 &  $< 0.01$ &         4307 &  4.4E-09 &                0 &      -- \\
  LSV &   MA86 &      111k &      2 &       0.01 &      0.03 &  $< 0.01$ &         4308 &  1.3E-08 &                0 &      -- \\
  LSV &   MA86 &      111k &      3 &       0.01 &      0.03 &  $< 0.01$ &         4308 &  2.0E-08 &                0 &      -- \\
  LSV &   MA86 &      111k &      4 &       0.01 &      0.03 &  $< 0.01$ &         4308 &  2.3E-08 &                0 &      -- \\
  LSV &   MA86 &      111k &      5 &       0.01 &      0.03 &  $< 0.01$ &         4308 &  1.7E-08 &                0 &      -- \\
  LSV &   MA86 &      111k &      6 &       0.01 &      0.03 &  $< 0.01$ &         4308 &  1.0E-08 &                0 &      -- \\
  LSV &   MA86 &      111k &      7 &       0.01 &      0.03 &  $< 0.01$ &         4308 &  1.0E-08 &                0 &      -- \\
  LSV &   MA86 &      111k &      8 &       0.01 &      0.03 &  $< 0.01$ &         4308 &  5.2E-09 &                0 &      -- \\
  LSV &   MA86 &      111k &      9 &       0.01 &      0.03 &  $< 0.01$ &         4308 &  3.0E-09 &                0 &      -- \\
  LSV &   MA86 &      111k &     10 &       0.01 &      0.03 &  $< 0.01$ &         4308 &  1.2E-10 &                0 &      -- \\
  \midrule
  LSV &   MA86 &      837k &      1 &       0.04 &       0.5 &      0.01 &         6611 &  2.6E-12 &                0 &      -- \\
  LSV &   MA86 &      837k &      2 &       0.04 &       0.5 &      0.01 &         6612 &  3.9E-07 &                0 &      -- \\
  LSV &   MA86 &      837k &      3 &       0.04 &       0.5 &      0.01 &         6612 &  2.2E-07 &                0 &      -- \\
  LSV &   MA86 &      837k &      4 &       0.04 &       0.5 &      0.01 &         6612 &  4.9E-07 &                0 &      -- \\
  LSV &   MA86 &      837k &      5 &       0.04 &       0.5 &      0.02 &         6612 &  6.0E-07 &                0 &      -- \\
  LSV &   MA86 &      837k &      6 &       0.04 &       0.5 &      0.01 &         6612 &  6.0E-07 &                0 &      -- \\
  LSV &   MA86 &      837k &      7 &       0.04 &       0.5 &      0.01 &         6612 &  8.4E-07 &                0 &      -- \\
  LSV &   MA86 &      837k &      8 &       0.04 &       0.5 &      0.01 &         6612 &  6.1E-07 &                0 &      -- \\
  LSV &   MA86 &      837k &      9 &       0.04 &       0.5 &      0.01 &         6612 &  1.5E-06 &                0 &      -- \\
  LSV &   MA86 &      837k &     10 &       0.04 &       0.5 &      0.01 &         6612 &  1.5E-06 &                0 &      -- \\
  \midrule
  LSV &   MA86 &        9M &      1 &        0.5 &        15 &       0.2 &        15827 &  2.0E-12 &                0 &      -- \\
  LSV &   MA86 &        9M &      2 &        0.5 &        15 &       0.1 &        15828 &  4.8E-07 &                0 &      -- \\
  LSV &   MA86 &        9M &      3 &        0.5 &        15 &       0.1 &        15828 &  3.3E-07 &                0 &      -- \\
  LSV &   MA86 &        9M &      4 &        0.5 &        15 &       0.1 &        15828 &  3.8E-07 &                0 &      -- \\
  LSV &   MA86 &        9M &      5 &        0.5 &        15 &       0.2 &        15828 &  3.9E-07 &                0 &      -- \\
  LSV &   MA86 &        9M &      6 &        0.5 &        15 &       0.1 &        15828 &  6.9E-07 &                0 &      -- \\
  LSV &   MA86 &        9M &      7 &        0.5 &        15 &       0.1 &        15828 &  1.3E-06 &                0 &      -- \\
  LSV &   MA86 &        9M &      8 &        0.5 &        15 &       0.2 &        15828 &  2.2E-06 &                0 &      -- \\
  LSV &   MA86 &        9M &      9 &        0.5 &        15 &       0.1 &        15828 &  3.2E-06 &                0 &      -- \\
  LSV &   MA86 &        9M &     10 &        0.5 &        15 &       0.1 &        15828 &  4.5E-06 &                0 &      -- \\
  \midrule
  LSV &   Ours &      111k &      1 &       0.06 &      0.06 &  $< 0.01$ &         4307 &  3.8E-11 &                0 &    0.46 \\
  LSV &   Ours &      111k &      2 &       0.06 &      0.05 &      0.01 &         4308 &  1.4E-07 &                1 &    0.51 \\
  LSV &   Ours &      111k &      3 &       0.06 &      0.06 &      0.01 &         4308 &  2.9E-07 &                1 &    0.49 \\
  LSV &   Ours &      111k &      4 &       0.06 &      0.06 &      0.01 &         4308 &  2.4E-07 &                1 &    0.44 \\
  LSV &   Ours &      111k &      5 &       0.06 &      0.05 &  $< 0.01$ &         4308 &  7.0E-06 &                0 &    0.53 \\
  LSV &   Ours &      111k &      6 &       0.06 &      0.05 &  $< 0.01$ &         4308 &  2.2E-06 &                0 &    0.54 \\
  LSV &   Ours &      111k &      7 &       0.06 &      0.05 &  $< 0.01$ &         4308 &  1.6E-06 &                0 &    0.54 \\
  LSV &   Ours &      111k &      8 &       0.06 &      0.06 &  $< 0.01$ &         4308 &  1.0E-06 &                0 &    0.53 \\
  LSV &   Ours &      111k &      9 &       0.06 &      0.06 &  $< 0.01$ &         4308 &  1.1E-06 &                0 &    0.53 \\
  LSV &   Ours &      111k &     10 &       0.06 &      0.06 &  $< 0.01$ &         4308 &  6.7E-07 &                0 &    0.52 \\
  \midrule
  LSV &   Ours &      837k &      1 &        0.4 &       0.2 &      0.01 &         6611 &  1.7E-10 &                0 &     2.1 \\
  LSV &   Ours &      837k &      2 &        0.4 &       0.2 &      0.02 &         6612 &  7.6E-07 &                1 &     2.0 \\
  LSV &   Ours &      837k &      3 &        0.4 &       0.2 &      0.02 &         6612 &  1.2E-06 &                1 &     2.0 \\
  LSV &   Ours &      837k &      4 &        0.4 &       0.2 &      0.02 &         6612 &  2.9E-06 &                1 &     2.1 \\
  LSV &   Ours &      837k &      5 &        0.4 &       0.9 &      0.02 &         6612 &  2.1E-06 &                1 &    0.58 \\
  LSV &   Ours &      837k &      6 &        0.4 &       0.2 &      0.02 &         6612 &  1.9E-06 &                1 &     2.1 \\
  LSV &   Ours &      837k &      7 &        0.4 &       0.2 &      0.03 &         6612 &  5.3E-07 &                2 &     2.0 \\
  LSV &   Ours &      837k &      8 &        0.4 &       0.2 &      0.03 &         6612 &  9.8E-07 &                2 &     2.0 \\
  LSV &   Ours &      837k &      9 &        0.4 &       0.2 &      0.03 &         6612 &  8.0E-06 &                2 &     2.0 \\
  LSV &   Ours &      837k &     10 &        0.4 &       0.2 &      0.05 &         6612 &  3.0E-06 &                4 &     1.9 \\
  \midrule
  LSV &   Ours &        9M &      1 &          6 &         1 &       0.1 &        15827 &  1.8E-10 &                0 &     9.6 \\
  LSV &   Ours &        9M &      2 &          6 &         1 &       0.2 &        15828 &  1.3E-06 &                1 &     9.6 \\
  LSV &   Ours &        9M &      3 &          6 &         1 &       0.2 &        15828 &  1.9E-06 &                1 &     9.5 \\
  LSV &   Ours &        9M &      4 &          6 &         1 &       0.3 &        15828 &  5.5E-07 &                2 &     9.1 \\
  LSV &   Ours &        9M &      5 &          6 &         1 &       0.3 &        15828 &  3.4E-06 &                2 &     9.0 \\
  LSV &   Ours &        9M &      6 &          6 &         1 &       0.3 &        15828 &  4.5E-06 &                2 &     9.0 \\
  LSV &   Ours &        9M &      7 &          6 &         1 &       0.3 &        15828 &  8.6E-07 &                3 &     8.7 \\
  LSV &   Ours &        9M &      8 &          6 &         1 &       0.4 &        15828 &  2.0E-06 &                4 &     8.4 \\
  LSV &   Ours &        9M &      9 &          6 &         1 &       0.5 &        15828 &  8.4E-06 &                5 &     7.9 \\
  LSV &   Ours &        9M &     10 &          6 &         1 &       0.7 &        15828 &  6.4E-06 &                7 &     7.4 \\
\bottomrule
    \label{tab:complete-runtime}
\end{longtable}
}

\bibliographystyle{siamplain}
\bibliography{ref}